\newtheorem{theorem}{Theorem}[section]
\newtheorem{lemma}[theorem]{Lemma}
\newtheorem{conjecture}[theorem]{Conjecture}
\theoremstyle{definition}
\theoremstyle{remark}
\DeclareMathOperator{\ex}{ex}
\newcommand{\Hom}{\mathrm{Hom}}
\newcommand{\EE}{\mathbb{E}}
\newcommand{\PP}{\mathbb{P}}
\newcommand{\ext}{\mathrm{ex}}
\newcommand{\J}{\mathcal{J}}
\newcommand{\K}{\mathcal{K}}
\tikzstyle{p}+=[fill=black, circle, minimum width = 1pt, inner sep =
\tikzstyle{w}+=[fill=white, draw, circle, minimum width = 1pt, inner sep =
\begin{document}

\title{On the extremal number of subdivisions}

\author{David Conlon\thanks{Mathematical Institute, Oxford OX2 6GG, United
Kingdom.
E-mail: {\tt
david.conlon@maths.ox.ac.uk}. Research supported by a Royal Society University Research Fellowship and by ERC Starting Grant 676632.} \and Joonkyung Lee\thanks{Mathematical Institute, Oxford OX2 6GG, United
Kingdom.
E-mail: {\tt
joonkyung.lee@maths.ox.ac.uk}. Research supported by ERC Starting Grant 676632.}}

\date{}

\maketitle

\begin{abstract}
One of the cornerstones of extremal graph theory is a result of F\"uredi, later reproved and given due prominence by Alon, Krivelevich and Sudakov, saying that if $H$ is a bipartite graph with maximum degree $r$ on one side, then there is a constant $C$ such that every graph with $n$ vertices and $C n^{2 - 1/r}$ edges contains a copy of $H$. This result is tight up to the constant when $H$ contains a copy of $K_{r,s}$ with $s$ sufficiently large in terms of $r$. We conjecture that this is essentially the only situation in which F\"uredi's result can be tight and prove this conjecture for $r = 2$. More precisely, we show that if $H$ is a $C_4$-free bipartite graph with maximum degree $2$ on one side, then there are positive constants $C$ and $\delta$ such that every graph with $n$ vertices and $C n^{3/2 - \delta}$ edges contains a copy of $H$. 
This answers a question of Erd\H{o}s from 1988.
The proof relies on a novel variant of the dependent random choice technique which may be of independent interest.
\end{abstract}

\section{Introduction}

Given a graph $H$ and a natural number $n$, the \emph{extremal number} $\ext(n, H)$ is the largest number of edges in an $H$-free graph with $n$ vertices. The classical Erd\H{o}s--Stone--Simonovits theorem~\cite{ESi66, ES46} states that
\[\ext(n, H) = \left(1 - \frac{1}{\chi(H)-1} + o(1)\right)\binom{n}{2},\]
where $\chi(H)$ is the chromatic number of $H$. At first glance, this gives an entirely satisfactory answer to the problem of estimating $\ext(n,H)$. However, for bipartite graphs, where $\chi(H) = 2$, it only gives the bound $\ext(n, H) = o(n^2)$. Attempts to find more accurate bounds for various bipartite $H$, particularly complete bipartite graphs and even cycles, occupy a central place in extremal combinatorics. We refer the interested reader to~\cite{FS13} for a thorough and detailed survey.

One of the few general results in this area, first proved by F\"uredi~\cite{Fu91} while establishing a conjecture of Erd\H{o}s and later reproved by Alon, Krivelevich and Sudakov~\cite{AKS03} using the celebrated dependent random choice technique (see~\cite{FS11} and its references), is the following.

\begin{theorem}[F\"uredi, Alon--Krivelevich--Sudakov] \label{thm:Furedi}
For any bipartite graph $H$ with maximum degree $r$ on one side, there exists a constant $C$ such that 
\[\ext(n,H) \leq C n^{2 - 1/r}.\]
\end{theorem}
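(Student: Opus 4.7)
The approach is the classical \emph{dependent random choice} argument, which is the proof given by Alon--Krivelevich--Sudakov. Let $H$ be bipartite with parts $A \cup B$, where every vertex in $A$ has degree at most $r$, and write $h = |V(H)|$. Let $G$ be a graph on $n$ vertices with average degree $d \geq 2Cn^{1 - 1/r}$ for a sufficiently large constant $C = C(H)$. The goal is to find, inside $G$, a vertex set $U'$ with $|U'| \geq h$ such that every subset $S \subseteq U'$ with $|S| \leq r$ has common neighbourhood of size at least $h$; once this is achieved, embedding $H$ becomes a routine greedy argument.

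To produce $U'$, I would pick a random multiset $T \subseteq V(G)$ of $r$ vertices chosen uniformly and independently, and set $U = \bigcap_{v \in T} N(v)$. Computing expectations vertex by vertex gives
\[
\EE[|U|] \;=\; \sum_{u \in V(G)} \paren{\frac{\deg(u)}{n}}^{\! r} \;\geq\; n \paren{\frac{d}{n}}^{\! r} \;=\; \frac{d^r}{n^{r-1}}
\]
by convexity (Jensen). Simultaneously, letting $Y$ count the $r$-subsets $S \subseteq U$ whose common neighbourhood in $G$ has size less than $h$, one has
\[
\EE[Y] \;\leq\; \sum_{S : |N(S)| < h} \paren{\frac{|N(S)|}{n}}^{\! r} \;\leq\; \binom{n}{r} \paren{\frac{h}{n}}^{\! r} \;\leq\; h^r.
\]
With $d \geq 2Cn^{1-1/r}$ and $C$ large enough in terms of $h$, the first bound dominates and $\EE[|U| - Y] \geq h$. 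Fix a realisation attaining at least this expectation, then delete one vertex from each bad $r$-subset; what remains is a set $U'$ of size at least $h$ in which every $r$-subset has at least $h$ common neighbours in $G$.

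Finally, I would embed $H$ vertex by vertex. Place the vertices of $B$ arbitrarily inside $U'$, which is possible since $|U'| \geq h \geq |B|$. Then process the vertices of $A$ one at a time: a vertex $a \in A$ has at most $r$ neighbours in $B$, already embedded to a set $S' \subseteq U'$ with $|S'| \leq r$. Extending $S'$ to any $r$-subset of $U'$ (using $|U'| \geq h \geq r$) and noting that $N(S') \supseteq N(S)$ for $S \supseteq S'$, the vertex $a$ has at least $h$ candidate images in $G$; since fewer than $h$ vertices of $G$ have been used so far, one valid choice remains.

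There is no real obstacle here: the only delicate point is the balance between the lower bound for $\EE[|U|]$, which requires $d$ to be large enough that the convexity term overtakes $h^r$, and the fact that the greedy embedding requires the common neighbourhood threshold to exceed $h$ rather than merely $|B|$. Choosing $C = C(h, r)$ sufficiently large absorbs both conditions, and tracking the precise dependence on $H$ is a routine calculation rather than a conceptual difficulty.
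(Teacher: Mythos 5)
Your proof is correct and is exactly the dependent random choice argument of Alon--Krivelevich--Sudakov, which the paper cites (without reproducing) as the source of Theorem~\ref{thm:Furedi}. The only cosmetic point is the parenthetical $h \geq r$ in the embedding step, which need not hold a priori but can be arranged by adding isolated vertices to $H$ or by taking $C$ large enough so that $|U'| \geq h + r$.
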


A result of Alon, R\'onyai and Szab\'o~\cite{ARSz99}, building on work in~\cite{KRS96}, says that if $K_{r,s}$ is the complete bipartite graph with parts of order $r$ and $s$, then $\ext(n, K_{r,s}) = \Omega(n^{2 - 1/r})$ for all $s > (r-1)!$. Therefore, if $H$ contains a copy of $K_{r,s}$ with $s > (r-1)!$, Theorem~\ref{thm:Furedi} is tight up to the implied constant. Moreover, if $\ext(n, K_{r,r}) = \Omega(n^{2 - 1/r})$, as is believed (at least by some~\cite{KST54}), F\"uredi's result would be tight for any $H$ containing a copy of $K_{r,r}$. We conjecture that this is the only way in which it can be tight.

\begin{conjecture} \label{conj:main}
For any bipartite graph $H$ with maximum degree $r$ on one side containing no $K_{r,r}$, there exist positive constants $C$ and $\delta$ such that 
\[\ext(n,H) \leq C n^{2 - 1/r - \delta}.\]
\end{conjecture}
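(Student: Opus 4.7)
I focus on the first non-trivial case $r = 2$, where $H$ is a $C_4$-free bipartite graph with bipartition $(A, B)$ and every $b \in B$ has at most two neighbors in $A$. The $C_4$-freeness of $H$ implies that the map sending each degree-$2$ vertex $b \in B$ to the unordered pair $\{a_1, a_2\} \subseteq A$ of its neighbors is injective, so the ``degree-$2$ part'' of $H$ can be encoded as a \emph{simple} graph $F$ on vertex set $A$; the degree-$1$ vertices of $B$ contribute only pendants and can be appended at the end.

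To embed $H$ into a host graph $G$, the standard F\"uredi/Alon--Krivelevich--Sudakov argument uses dependent random choice to find a set $U \subseteq V(G)$ of size at least $\abs{V(H)}$ in which \emph{every} pair has many common neighbors; one then places $A$ into $U$ arbitrarily and picks distinct common neighbors for each edge of $F$ to serve as the image of each $b \in B$. This only recovers $\ex(n,H) = O(n^{3/2})$. To push the exponent strictly below $3/2$, the key observation is that since $F$ is simple, it suffices that the \emph{pair graph} $P_U$ on $U$---with $\{u,v\}$ an edge whenever $\abs{N(u) \cap N(v)} \geq D$ for some threshold $D$---merely \emph{contains} a copy of $F$, rather than being complete on $U$. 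The plan is therefore to design a variant of dependent random choice that produces a moderately sized $U$ in which $P_U$ is very dense, then embed the fixed graph $F$ into $P_U$ by a routine greedy procedure, using the codegree threshold $D \geq \abs{B}$ to ensure that the common neighbors chosen for distinct edges of $F$ are all distinct.

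The main obstacle is that a graph $G$ with only $Cn^{3/2-\delta}$ edges can itself be $C_4$-free (this is below the extremal threshold for $C_4$), in which case \emph{no} pair of vertices has more than one common neighbor and $P_U$ is empty for any $D \geq 2$. I would therefore split into two regimes: if $G$ contains many copies of $C_4$, apply the new dependent random choice variant to obtain a dense $P_U$ into which $F$ embeds; if instead $G$ contains few $C_4$'s, then $G$ is close to $C_4$-free and the resulting structural constraints on neighborhoods should permit embedding the $C_4$-free graph $H$ directly by a greedy or probabilistic argument that builds $A$ vertex by vertex and uses unique codegree-$1$ common neighbors. The heart of the proof---and the place where I expect the genuine technical difficulty---is the construction of this new dependent random choice variant that outputs a \emph{dense} pair graph on a \emph{large} set (rather than a \emph{complete} pair graph on a \emph{small} set), together with a careful balancing of the two regimes so that both yield the same quantitative saving $\delta > 0$. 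The corresponding programme for $r \geq 3$, with $K_{r,r}$ taking the place of $C_4$, appears to require substantially new ideas and I do not address it here.
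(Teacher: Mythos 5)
Your high-level architecture matches the paper's in two important ways: you correctly isolate the $r=2$ case, and you identify the split between a $C_4$-rich regime (handled by a new variant of dependent random choice) and a $C_4$-poor regime. You also correctly recognize that the $C_4$-rich case is what makes a novel DRC variant possible --- the paper's Lemma~\ref{lem:manyC4asym_2} exploits precisely the hypothesis $|\Hom(C_4,G)| \geq M|\Hom(K_{1,2},G)|$.

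However, there is a genuine gap in the $C_4$-poor branch, and your description of the DRC output is not quite how the paper's argument lands. On the DRC side: the paper does not settle for a set $U$ in which the pair graph is merely dense; rather, Lemma~\ref{lem:manyC4asym_2} shows that a random dependent choice makes the \emph{fraction} of bad pairs (codegree $< h$) in $N_{B'}(z)$ smaller than $1/h^2$, and then Tur\'an's theorem is invoked to extract a genuine clique $K_h$ of high-codegree pairs. So the endpoint is still a complete pair graph on a set of size $h$; the win comes from where and how that set is found, not from accepting density below $1$. On the $C_4$-poor side, your sketch --- ``the resulting structural constraints on neighborhoods should permit embedding $H$ directly by a greedy or probabilistic argument that builds $A$ vertex by vertex'' --- glosses over where the actual difficulty lives. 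A single global dichotomy is not enough: after deleting heavy edges and passing to the low-codegree graph $G_A'$, one has no direct handle on embedding $F$. The paper instead \emph{iterates} the dichotomy at smaller and smaller scales. The set $A_u = \{v : 0 < d(u,v) < n^{\xi}\}$ is itself examined via Lemma~\ref{lem:dichotomy}: either $G[A_u, B]$ has a dramatic excess of $C_4$'s (done by the DRC lemma again), or $A_u$ is $n^{\varepsilon}/\gamma$-bounded, which yields enough low-weight structure inside $A_u$ to pass to the next level. Lemma~\ref{lem:iterate} makes this precise by inductively producing many ``good'' $t$-tuples whose pairwise codegrees are simultaneously positive and small, with quantitatively controlled parameters $\xi_1 < \cdots < \xi_t$ and $\delta_j = \delta/6^{t-j}$. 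Without this recursive use of the dichotomy, the $C_4$-poor case does not close. Finally, you do not address degeneracy: the embedding produces homomorphic copies of $H_t$ that may collapse on subdividing vertices, and a substantial portion of the paper (the counting of $K_{1,3}$-extensions and the balancing of $c$, $\varepsilon$, $\delta$, $\kappa$) is devoted to showing that the non-degenerate copies dominate. This is where the final numerology $\delta_t = 1/6^t$ comes from, and a ``routine greedy procedure'' does not obviously deliver it.
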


The principal motivation behind Conjecture~\ref{conj:main}, and the main result of this paper, is that it holds for $r = 2$.

\begin{theorem}\label{thm:1subdiv}
For any bipartite graph $H$ with maximum degree two on one side containing no $C_4$, there exist positive constants $C$ and $\delta$ such that
\[\ext(n,H) \leq C n^{3/2 - \delta}.\]
\end{theorem}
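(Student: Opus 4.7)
Assume for contradiction that $G$ is $H$-free and has $m = Cn^{3/2-\delta}$ edges, with $C$ and $\delta$ to be chosen; the goal is to find a copy of $H$ in $G$. First, pass to a subgraph $G' \subseteq G$ of minimum degree $d \gtrsim m/n \asymp n^{1/2-\delta}$ by the usual iterative cleaning. Next, reformulate $H$: writing $V(H) = A \cup B$ with each vertex of $A$ of degree at most $2$, the degree-$2$ vertices of $A$ correspond to the edges of a graph $F$ on vertex set $B$, and this $F$ is \emph{simple} precisely because $H$ is $C_4$-free; the degree-$1$ vertices of $A$ correspond to pendants hanging off vertices of $B$. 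In this language, embedding $H$ into $G'$ reduces to finding an injection $\phi : B \hookrightarrow V(G')$ together with a distinct common neighbor of $\phi(b_1)$ and $\phi(b_2)$ in $G'$ for every edge $\{b_1,b_2\} \in E(F)$, and a distinct neighbor of $\phi(b)$ for every pendant attached to $b$.

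The main tool is a variant of dependent random choice. Standard DRC produces a set $U \subseteq V(G')$ in which \emph{every} pair has at least $s$ common neighbors; with $d \gtrsim n^{1/2}$ this yields $|U| \geq |V(F)|$ and $s \geq |A|$, exactly recovering the bound $\ext(n,H) = O(n^{3/2})$ of Theorem~\ref{thm:Furedi}. Below this threshold the universal co-degree condition fails, so the plan is to relax it: since $F$ is simple and of bounded size, we only need large common neighborhoods along the $|E(F)|$ \emph{specific} pairs used by the embedding, not along all pairs of $U$. A natural way to achieve this is through multi-round sampling: first sample a small seed $T_0$ and take $U_0 = \set{v : T_0 \subseteq N(v)}$, then, inside $U_0$, sample a further seed according to a distribution biased by the co-degree function $|N(u)\cap N(v)|$ in order to extract a smaller set $U$ whose co-degree graph already contains an embedded copy of $F$ with sufficiently large common-neighborhood sets. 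With the parameters balanced against $d = n^{1/2-\delta}$, the output should give $|U| \geq |V(F)|$ together with reservoirs of common neighbors of size much larger than $|A|$, after which the subdivision vertices and pendants can be placed by a standard greedy argument.

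The principal difficulty is precisely designing and analysing this variant DRC so that it simultaneously produces (a) a set $U$ of size at least $|V(F)|$ and (b) an embedding of $F$ into the co-degree graph on $U$ whose edge-reservoirs all have polynomial size, while the minimum degree in $G'$ is only $n^{1/2-\delta}$. The expectation calculations have to trade off the contribution of ``bad'' configurations --- pairs of $U$ whose co-degree is too small to support an edge of $F$, or pairs whose co-degree is so large that they participate in a forbidden dense $K_{2,t}$-type subgraph that could be exploited directly to locate $H$ --- against the gain obtained by requiring only $|E(F)|$ large reservoirs rather than $\binom{|U|}{2}$. A secondary technical step, handled by a preliminary partition of $V(G')$ into disjoint roles, is to ensure that pendants and subdivision vertices draw from disjoint resources and no vertex is reused. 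The final exponent $\delta > 0$ will emerge from these estimates and will depend on $|V(F)|$ and $|A|$.
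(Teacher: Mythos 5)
Your high-level reading of the problem is sound: you correctly see that standard dependent random choice only reaches $n^{3/2}$, that the point is to trade the universal co-degree condition on $\binom{|U|}{2}$ pairs for a condition on only $|E(F)|$ pairs, and that the argument must split according to whether co-degrees are concentrated on a few heavy pairs or spread out. But the proposal never actually supplies the key mechanism --- you say ``the principal difficulty is precisely designing and analysing this variant DRC'' and leave it there. That is the entire theorem. As stated, the proposal is a plan that stops exactly where the proof has to start.

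There are also two places where the route you sketch diverges from what actually works, and where I think the sketch would run into trouble. First, the proposed ``multi-round sampling with a seed biased by the co-degree function'' is not what does the job. The paper's key lemma (Lemma~\ref{lem:manyC4asym_2}) is a single-round DRC of an unusual shape: one compares $|\Hom(C_4,G)|$ with $|\Hom(K_{1,2},G)|$, picks one good vertex $x$ by averaging, dyadically partitions $N(N(x))$ by degree into $B'=N(x)$, selects the dominant dyadic level $A_j$, and only then samples a single random $z\in A_j$ and bounds the fraction of low-co-degree (``bad'') pairs in $N_{B'}(z)$. The dyadic level choice is what replaces the biasing you describe, and it is essential because it gives the bound $2^j\gtrsim M/\log n$ that makes $N_{B'}(z)$ large and forces a near-complete graph of good pairs; a na\"ive co-degree-biased seed does not obviously produce this. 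Second, the case where co-degrees are \emph{not} concentrated is not handled by random choice at all. There the paper switches to a deterministic iteration (Lemma~\ref{lem:iterate}): at each step it re-applies the dichotomy lemma (Lemma~\ref{lem:dichotomy}) to the current extension set, removes the heavy-co-degree edges via Lemma~\ref{lem:heavy}, and uses Lemma~\ref{lem:large_nbd} to produce a slightly smaller set with controlled co-degrees, building a clique in the neighbourhood graph one vertex at a time. This loses a small power of $n$ at every step, which is exactly why $\delta$ decays like $6^{-t}$; your sketch gives no indication of where the exponent comes from or why the loss per step is affordable.

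Finally, the last step is not a ``standard greedy argument.'' Having found a clique in the neighbourhood graph with moderately bounded co-degrees, one gets a large family of homomorphisms from $H_t$ to $G$, but many of them may be degenerate (two subdivision vertices landing on the same image). The paper has to upper-bound the number of degenerate homomorphisms by a careful $K_{1,3}$-rooted counting argument and show it is dominated by the number of good tuples; this is where the bound on co-degrees (each $d(a_i,a_{i'})<n^{\xi_i}$) is crucially used, and without it the argument fails. A greedy embedding avoiding reuse does not work because the reservoir sizes are not large enough relative to $|V(G)|$. So: right problem decomposition and right intuition about the dichotomy, but the core lemma is missing and the final embedding step is handled incorrectly.
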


Another way of viewing Theorem~\ref{thm:1subdiv} is in terms of subdivisions. The \emph{$k$-subdivision} $H^{k}$ of a graph $H$ is the graph obtained from $H$ by replacing the edges of $H$ with internally disjoint paths of length $k+1$. When $k = 1$, we simply talk about the \emph{subdivision} of $H$. Theorem~\ref{thm:1subdiv} is then equivalent to the statement that for any graph $H$, there exist positive constants $C$ and $\delta$ such that the extremal number of the subdivision of $H$ is at most $C n^{3/2 - \delta}$. Moreover, since $H$ is contained in the clique on $v(H)$ vertices, it clearly suffices to establish the required estimate for the subdivision of cliques.

We are by no means the first people to study the extremal number of subdivisions. For instance, even cycles, which occupy a central place in extremal graph theory~\cite{V16}, are themselves subdivisions. A more explicit mention of subdivisions was made by Erd\H{o}s~\cite{Er88} in 1988, when he asked whether, for every $t \geq 3$, there exist $\alpha<3/2$ and $C>0$ such that the extremal number of the subdivision of $K_t$ is at most $Cn^{\alpha}$. 
Theorem~\ref{thm:1subdiv} answers this old question affirmatively.

For longer subdivisions, the extremal numbers were studied in detail by Jiang and Seiver~\cite{JS12}, building on earlier work on the extremal function for the closely-related notion of topological minor~\cite{J11, KP88}. They showed that if $k$ is odd, then $\ext(n, K_t^{k}) = O_{k,t} (n^{1 + 16/(k + 1)})$, an estimate which has the correct form, though the constant $16$ in the exponent is certainly not best possible (see the concluding remarks for some more discussion on this point). Further related results may be found dotted throughout the literature, for example in~\cite{A94Sub, AKS03, FS09}.

Like many of these advances, our approach to proving Theorem~\ref{thm:1subdiv} is based on applying the dependent random choice technique. However, the means by which we apply this technique is quite non-standard. Very roughly, we split into two distinct cases. The first case is when there is a large subgraph which contains significantly more $C_4$'s than one would expect in a random graph of the same density. In this case, we can apply a novel variant of dependent random choice, described in Section~\ref{sec:DRC}, to show that the graph contains the required graph $H$. On the other hand, if no large subgraph contains too many $C_4$'s, we know that the copies of $K_{1,2}$ must be quite well-distributed throughout, a property which will again be enough to find the required copy of $H$ (though several difficulties arise in making this intuition rigorous). We now begin the details in earnest.

\section{Preliminaries}

In this section, we collect a few results that will be useful to us in what follows. The first such result says that every sufficiently dense graph contains a large subgraph which is almost regular and such that the number of edges is a similar function of the number of vertices as in the original graph. This is essentially due to Erd\H{o}s and Simonovits~\cite[Theorem 1]{ES70}, though we use a slight variant noted by Jiang and Seiver~\cite[Proposition 2.7]{JS12}. In stating this result, we say that a graph $G$ is $K$-\emph{almost-regular} if $\max_{v\in V(G)}\deg(v)\leq K\min_{v\in v(G)}\deg(v)$.

\begin{lemma} \label{lem:ES}
For any positive constant $\alpha < 1$, there exists $n_0$ such that if $n \geq n_0$, $C \geq 1$ and $G$ is an $n$-vertex graph with at least $C n^{1 + \alpha}$ edges, then $G$ has a $K$-almost-regular subgraph $G'$ with $m$ vertices such that
$m\geq n^{\frac{\alpha(1-\alpha)}{2(1+\alpha)}}$, $|E(G')|\geq \frac{2 C}{5}m^{1+\alpha}$ and $K=20\cdot 2^{1+1/\alpha^2}$.
\end{lemma}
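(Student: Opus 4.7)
The plan is to follow the classical Erd\H{o}s--Simonovits iterative regularization argument. Starting from $G_0 := G$, I construct a decreasing sequence of induced subgraphs $G_0 \supseteq G_1 \supseteq \cdots$, each of which retains a controlled fraction of the edges of its predecessor while narrowing the range of degrees, until we arrive at one that is $K$-almost-regular.

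At each stage $t$, write $n_t := |V(G_t)|$, $e_t := |E(G_t)|$, and $\bar d_t := 2 e_t / n_t$. I first discard every vertex of degree below $\bar d_t / K'$ for an appropriate constant $K' = K'(\alpha)$; the number of edges removed is at most $\bar d_t n_t / K' = 2 e_t / K'$, so at least half of the edges survive, and the normalized density $e_t / n_t^{1+\alpha}$ drops by at most a constant factor. I then partition the surviving vertices into dyadic classes $V_j^{(t)} := \{v : 2^j \le \deg(v) < 2^{j+1}\}$. If the degrees span at most $\log_2 K$ dyadic scales, the current graph is already $K$-almost-regular and I output it. Otherwise, by pigeonhole over the dyadic levels, some level carries only a small fraction of the edge endpoints; cutting $V(G_t)$ there splits the vertices into a ``low-degree'' and a ``high-degree'' part, at least one of whose induced subgraphs retains a constant fraction of $e_t$. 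That subgraph becomes $G_{t+1}$.

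To control the parameters, I track that after each cut the normalized edge density drops by at most a constant factor $c < 1$, while the vertex count can shrink by at most a fixed power of itself determined by $\alpha$ (since the retained subgraph has at least $\sim n_t^{1+\alpha}$ edges and at most $\binom{n_{t+1}}{2}$ possible edges, forcing $n_{t+1} \ge n_t^{(1-\alpha)/(1+\alpha)}$ up to constants). Iterating, the process terminates after $O(1/\alpha^2)$ rounds with a $K$-almost-regular induced subgraph $G'$ on $m \ge n^{\alpha(1-\alpha)/(2(1+\alpha))}$ vertices satisfying $|E(G')| \ge \tfrac{2C}{5} m^{1+\alpha}$, all constant losses being absorbed into the factor $\tfrac{2}{5}$. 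The factor $K = 20 \cdot 2^{1+1/\alpha^2}$ comes from widening the admissible dyadic range once per iteration, times a universal constant to cover the per-step losses from the low-degree cleaning and the cut.

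The main obstacle is the bookkeeping: one must carefully quantify the edge-density loss and the vertex-count shrinkage at each cut so that the total loss over all $O(1/\alpha^2)$ iterations remains geometric rather than compounding catastrophically. Choosing the cut at the dyadic scale of minimum edge contribution (rather than an arbitrary one) is essential to keep the edge loss at a constant factor, and this is where the precise exponent $\alpha(1-\alpha)/(2(1+\alpha))$ emerges as the fixed point of the iteration. Since the result is essentially \cite[Theorem 1]{ES70} and the variant \cite[Proposition 2.7]{JS12} cited in the statement, I would follow that argument directly and simply verify that the constants can be chosen as stated.
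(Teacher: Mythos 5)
The paper does not prove this lemma; it is quoted verbatim from Jiang and Seiver~\cite[Proposition 2.7]{JS12}, who adapted it from Erd\H{o}s and Simonovits~\cite[Theorem 1]{ES70}. Your proposal ultimately defers to those same references, which is consistent with the paper's treatment, but the sketch you give en route has a genuine gap at the cut step. After locating a dyadic degree-level carrying few edge endpoints, you assert that cutting there splits the vertex set into a low-degree and a high-degree part, one of whose \emph{induced} subgraphs retains a constant fraction of the edges. That assertion is false: the edges crossing the threshold are not incident to the light level, so its lightness gives no control over them. Take $G = K_{a,b}$ with $a$ much smaller than $b$ but still with $ab \ge C(a+b)^{1+\alpha}$. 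The average degree is about $2a$, so no vertex has degree below $\bar d/K'$ and your cleaning step deletes nothing. The only populated dyadic levels are the one holding the side $Y$ (degree $a$) and the one holding $X$ (degree $b$), each carrying exactly half the edge endpoints, while every intermediate level is empty and carries none. Cutting at any empty intermediate level yields the parts $X$ and $Y$, both independent sets, so neither induced subgraph retains a single edge.

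The failure is not cosmetic. The graph $K_{a,b}$ shows that the desired $K$-almost-regular subgraph (here essentially $K_{a,a}$) need not be induced on any union of dyadic degree classes: one must discard a large portion of the \emph{lower}-degree side $Y$ precisely in order to pull down the degrees on the $X$ side. The actual Erd\H{o}s--Simonovits / Jiang--Seiver argument handles this by tracking a normalized density such as $e(H)/|V(H)|^{1+\alpha}$ and iteratively passing to subgraphs that increase it; vertices with degree below roughly $(1+\alpha)$ times the current average get removed even when they lie in the middle of the dyadic spectrum, and the interaction between that pruning and the maximum-degree control is what produces the $1/\alpha^2$ in $K$ and the exponent $\alpha(1-\alpha)/(2(1+\alpha))$. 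If you plan to write out a proof rather than cite, follow that argument directly; the dyadic-cut selection rule as you have sketched it does not give the claimed iteration.
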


We will in fact need a bipartite version of this lemma, but this follows as a simple corollary of the original lemma and Chernoff's inequality, which we now recall.

\begin{lemma}\label{lem:chernoff}
Let $X$ be a binomial random variable with parameters $n$ and $p$, i.e., $X\sim \mathrm{Bin}(n,p)$, and let $\varepsilon\in (0,1)$.
Then
\begin{align*}
\PP [|X-\EE(X)|\geq \varepsilon \EE(X)]\leq 2\exp(-\varepsilon^2 \EE(X)/3).
\end{align*}
\end{lemma}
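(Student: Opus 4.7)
The plan is to use the classical exponential-moment (Chernoff) argument; the result is textbook, but for completeness I outline the computation. Write $X=\sum_{i=1}^n X_i$ with $X_i$ i.i.d.\ Bernoulli$(p)$, and set $\mu=\EE[X]=np$. For any $\lambda>0$, Markov's inequality applied to the nonnegative random variable $e^{\lambda X}$ yields
\[
\PP[X\geq (1+\varepsilon)\mu]\leq e^{-\lambda(1+\varepsilon)\mu}\,\EE[e^{\lambda X}],
\]
and by independence together with the bound $1+x\leq e^x$ one gets
\[
\EE[e^{\lambda X}]=(1-p+pe^{\lambda})^n\leq \exp\!\paren{\mu(e^{\lambda}-1)}.
\]

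Next I would optimize over $\lambda$ by choosing $\lambda=\log(1+\varepsilon)$, which gives
\[
\PP[X\geq (1+\varepsilon)\mu]\leq \exp\!\paren{\mu\bigl(\varepsilon-(1+\varepsilon)\log(1+\varepsilon)\bigr)}.
\]
The elementary scalar estimate $(1+\varepsilon)\log(1+\varepsilon)\geq \varepsilon+\varepsilon^2/3$, valid for $\varepsilon\in(0,1)$ and verified by noting that the difference vanishes at $\varepsilon=0$ and has nonnegative derivative throughout (a short monotonicity check using $f''(\varepsilon)=1/(1+\varepsilon)-2/3$ on $[0,1/2]$ and direct evaluation on $[1/2,1]$), then upgrades this to the clean upper tail bound $\PP[X\geq(1+\varepsilon)\mu]\leq \exp(-\varepsilon^2\mu/3)$.

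For the lower tail I would run the symmetric argument with $e^{-\lambda X}$ for $\lambda>0$; choosing the optimal $\lambda=-\log(1-\varepsilon)$ and using the inequality $(1-\varepsilon)\log(1-\varepsilon)\geq -\varepsilon+\varepsilon^2/2$ yields the even sharper $\PP[X\leq (1-\varepsilon)\mu]\leq \exp(-\varepsilon^2\mu/2)\leq \exp(-\varepsilon^2\mu/3)$. A union bound over the two tails contributes the factor of $2$ and gives the stated two-sided inequality.

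There is no real obstacle here: the proof is entirely standard, and the only slightly delicate step is the scalar inequality $(1+\varepsilon)\log(1+\varepsilon)\geq \varepsilon+\varepsilon^2/3$ on $(0,1)$, which is precisely what produces the specific constant $3$ in the denominator of the exponent. Everything else is routine manipulation of moment generating functions together with Markov's inequality.
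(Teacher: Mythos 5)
Your proof is correct and is the standard exponential-moment (Markov plus optimized $\lambda$) derivation of the multiplicative Chernoff bound. The paper states this lemma without proof, citing it as a known textbook fact, so there is no in-paper argument to compare against; your two scalar inequalities $(1+\varepsilon)\log(1+\varepsilon)\geq\varepsilon+\varepsilon^2/3$ and $(1-\varepsilon)\log(1-\varepsilon)\geq-\varepsilon+\varepsilon^2/2$ on $(0,1)$ are both valid (the first because $f'(\varepsilon)=\log(1+\varepsilon)-2\varepsilon/3$ is concave with $f'(0)=0$ and $f'(1)=\log 2-2/3>0$, the second because $-\log(1-\varepsilon)\geq\varepsilon$), and the union bound over the two tails gives exactly the stated inequality.
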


In the next result, our bipartite version of Lemma~\ref{lem:ES}, we say that a bipartite graph $G$ with bipartition $A\cup B$ is \emph{balanced} if $\frac{1}{2}|B|\leq |A|\leq 2|B|$. 

\begin{lemma}\label{lem:regular}
For any positive constant $\alpha < 1$, there exists $n_0$ such that if $n \geq n_0$, $C \geq 1$ and $G$ is an $n$-vertex graph with at least $C n^{1 + \alpha}$ edges, then $G$ has a $K$-almost-regular balanced bipartite subgraph $G''$ with $m$ vertices such that $m \geq n^{\frac{\alpha(1-\alpha)}{2(1+\alpha)}}$, $|E(G'')|\geq \frac{C}{10}m^{1+\alpha}$ and $K=60\cdot 2^{1+1/\alpha^2}$.
\end{lemma}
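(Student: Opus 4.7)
The plan is to apply Lemma~\ref{lem:ES} to obtain an almost-regular subgraph $G'$ of $G$ with $m$ vertices, at least $\tfrac{2C}{5}m^{1+\alpha}$ edges, and min/max-degree ratio at most $K_0:=20\cdot 2^{1+1/\alpha^2}$, and then to produce $G''$ by a random bipartition. Specifically, assign each vertex of $G'$ independently to one of two sets $A$, $B$, each with probability $\tfrac12$, and let $G''$ consist of all edges of $G'$ with one endpoint in $A$ and the other in $B$. The target constant $K=3K_0=60\cdot 2^{1+1/\alpha^2}$ anticipates a $(1\pm\tfrac12)$-type slack in the vertex degrees produced by the bipartition.

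The analysis then reduces to three applications of Chernoff (Lemma~\ref{lem:chernoff}). First, $|A|\sim\mathrm{Bin}(m,\tfrac12)$, so taking $\varepsilon=\tfrac14$ gives $|A|,|B|\in[\tfrac{3m}{8},\tfrac{5m}{8}]$ except with probability $2\exp(-m/96)$, which certifies balancedness. Second, for each vertex $v\in V(G')$ of degree $d_v$, the degree $d_v''$ of $v$ in $G''$ is distributed as $\mathrm{Bin}(d_v,\tfrac12)$, irrespective of which side $v$ lies on, so taking $\varepsilon=\tfrac12$ yields $d_v''\in[\tfrac{d_v}4,\tfrac{3d_v}4]$ except with probability $2\exp(-d_v/24)$. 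Since $G'$ is $K_0$-almost-regular with average degree at least $\tfrac{4C}{5}m^\alpha\geq\tfrac{4}{5}m^\alpha$, the minimum degree of $G'$ is at least $\tfrac{4}{5K_0}m^\alpha$, which tends to infinity in $n$ provided $n_0$ is chosen large enough; hence a union bound over the $m$ vertices still leaves positive probability that all three events hold.

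On this event, $\max_v d_v''/\min_v d_v''\leq 3\cdot(\max_v d_v/\min_v d_v)\leq 3K_0=K$, so $G''$ is $K$-almost-regular. Summing degrees gives
\[2|E(G'')|=\sum_{v}d_v''\geq\tfrac14\sum_v d_v=\tfrac12|E(G')|\geq\tfrac{C}{5}m^{1+\alpha},\]
so $|E(G'')|\geq\tfrac{C}{10}m^{1+\alpha}$, as required, while the vertex-count bound $m\geq n^{\alpha(1-\alpha)/(2(1+\alpha))}$ is inherited from Lemma~\ref{lem:ES}. The only genuinely delicate point is the constant-chasing: the multiplicative factor $3$ lost to the bipartition matches the jump from $20$ to $60$ in the two lemma statements exactly, so Chernoff with $\varepsilon=\tfrac12$ for the vertex degrees is essentially the tightest choice compatible with the target $K$. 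No deeper obstacle arises.
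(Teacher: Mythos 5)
Your proof is correct and follows essentially the same approach as the paper's: apply Lemma~\ref{lem:ES} to get an almost-regular subgraph $G'$, then use a uniform random bipartition and three applications of Chernoff (for $|A|$ and for each vertex degree with $\varepsilon=\tfrac12$), concluding via a union bound. The only cosmetic difference is the choice of $\varepsilon$ for the balance condition ($\tfrac14$ vs.\ the paper's $\tfrac13$), which is immaterial.
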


\begin{proof}
Let $L = 20\cdot 2^{1+1/\alpha^2}$. By Lemma~\ref{lem:ES}, there exists an $m$-vertex $L$-almost-regular subgraph $G'$ of $G$ such that $m\geq n^{\frac{\alpha(1-\alpha)}{2(1+\alpha)}}$ and $|E(G')|\geq \frac{2C}{5}m^{1+\alpha}$.
Let $A$ be a random subset of $V(G')$ taken by including each vertex $v\in V(G')$ in $A$ independently with probability $1/2$ and let $B=V(G')\setminus A$.
Then $\EE(|A|)=m/2$ and, thus, by Lemma~\ref{lem:chernoff} with $\varepsilon=1/3$,
\begin{align*}
\PP[m/3\leq |A|\leq 2m/3]\geq 1-2\exp(-m/54).
\end{align*}
Let $G''$ be the bipartite subgraph $G'[A,B]$. Since for each $v\in V(G')$, $\EE(\deg_{G''}(v))=\frac{1}{2}\deg_{G'}(v)$, Lemma~\ref{lem:chernoff} with $\varepsilon=1/2$ again implies that
\begin{align*}
\PP\left[\frac{1}{4}\deg_{G'}(v)\leq \deg_{G''}(v)\leq\frac{3}{4}\deg_{G'}(v)\right]
\geq 1-2\exp(-\deg_{G'}(v)/24)\geq 1-2\exp(-m^{\alpha}/72L).
\end{align*}
Hence, if $n$ is large enough to guarantee that $2\exp(-m/54)+2m\exp(-m^{\alpha}/72L)<1$, then with nonzero probability $G''$ is an $m$-vertex $3L$-almost-regular balanced bipartite graph with at least $|E(G')|/4 \geq \frac{C}{10} m^{1+\alpha}$ edges.
\end{proof}

Given a bipartite graph $G$ on $A\cup B$, the \emph{neighbourhood graph} on $A$
is the weighted graph $W$ on vertex set $A$, where the edge weight $W(u,v)$ is given by the codegree $d(u,v)$. We will need the following lemma, based on a similar result from~\cite{CKLL15}, saying that if every vertex in $A$ has high minimum degree in $G$, then the neighbourhood graph has substantial weight on every large subset of $A$. 

\begin{lemma}\label{lem:local_dense}
Let $G$ be a bipartite graph with bipartition $A\cup B$, $|B|=n$,
and minimum degree at least $\delta$ on the vertices in $A$. Then, for any subset $U\subseteq A$ with $\delta|U|\geq 2n$,
\begin{align*}
\sum_{uv\in\binom{U}{2}}d(u,v)\geq \frac{\delta^2}{2n}\binom{|U|}{2},
\end{align*}
where $d(u,v)$ is the codegree of $u$ and $v$.
\end{lemma}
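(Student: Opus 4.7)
The plan is to double-count, via $B$, the number of cherries (paths of length two) with both endpoints in $U$. For each $b\in B$, let $d_U(b):=|N(b)\cap U|$. Then every pair $\{u,v\}\in\binom{U}{2}$ with common neighbour $b$ contributes once to $\binom{d_U(b)}{2}$, so
\[
\sum_{uv\in\binom{U}{2}} d(u,v)\;=\;\sum_{b\in B}\binom{d_U(b)}{2}.
\]
This is the identity that reduces the problem to estimating $\sum_{b\in B}\binom{d_U(b)}{2}$ from below.

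Next I would apply convexity (Jensen's inequality) to the convex function $x\mapsto x(x-1)/2$. Writing $\bar d:=\tfrac{1}{n}\sum_{b\in B}d_U(b)$, this gives
\[
\sum_{b\in B}\binom{d_U(b)}{2}\;\geq\; n\cdot\frac{\bar d(\bar d-1)}{2}.
\]
Now I would compute $\bar d$ via the other direction of double counting: $\sum_{b\in B}d_U(b)=\sum_{u\in U}\deg_G(u)\geq \delta|U|$, so $\bar d\geq \delta|U|/n$.

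Finally I would use the hypothesis $\delta|U|\geq 2n$, which forces $\bar d\geq 2$, hence $\bar d-1\geq \bar d/2$ and therefore $\bar d(\bar d-1)\geq \bar d^2/2\geq \delta^2|U|^2/(2n^2)$. Plugging this back in,
\[
\sum_{uv\in\binom{U}{2}} d(u,v)\;\geq\;\frac{n}{2}\cdot\frac{\delta^2|U|^2}{2n^2}\;=\;\frac{\delta^2|U|^2}{4n}\;\geq\;\frac{\delta^2}{2n}\binom{|U|}{2},
\]
which is the desired bound. There is no real obstacle: the argument is a clean double-count plus a convexity estimate, and the hypothesis $\delta|U|\geq 2n$ is used exactly to pass from $\bar d(\bar d-1)$ to $\bar d^2/2$, compensating for the $\binom{|U|}{2}$ versus $|U|^2$ discrepancy on the right-hand side.
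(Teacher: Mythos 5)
Your argument is correct and is essentially identical to the paper's: both rewrite $\sum_{uv\in\binom{U}{2}}d(u,v)$ as $\sum_{b\in B}\binom{d_U(b)}{2}$, apply convexity of $\binom{x}{2}$, and invoke the hypothesis $\delta|U|\geq 2n$ at the final step. The only cosmetic difference is that you pass through $\bar d-1\geq \bar d/2$ while the paper compares $n\binom{\delta|U|/n}{2}$ directly with $\frac{\delta^2}{2n}\binom{|U|}{2}$; these are equivalent uses of the same hypothesis.
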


\begin{proof}
Writing $d_U(v)$ for the degree of a vertex $v$ in $U$, we have that
\begin{align*}
\sum_{uv\in\binom{U}{2}}d(u,v) & = \sum_{b \in B} \binom{d_U(b)}{2} \geq n \binom{\sum_b d_U(b)/n}{2}\\
& = n \binom{\sum_u d(u)/n}{2} \geq n \binom{\delta |U|/n}{2} \geq \frac{\delta^2}{2n} \binom{|U|}{2},
\end{align*}
where the first inequality follows from the convexity of $\binom{x}{2}$ and the last inequality makes use of the assumption $\delta|U|\geq 2n$.
\end{proof}

We say that a weighted graph $W$ on vertex set $V$ is \emph{$(\rho,d)$-dense} if 
\begin{align*}
	\sum_{uv\in\binom{U}{2}}W(u,v)\geq d\binom{|U|}{2}
\end{align*}
for every $U\subseteq V$ with $|U|\geq\rho|V|$. Lemma \ref{lem:local_dense} then implies that the neighbourhood graph on $A$ is $(2n/\delta|A|,\delta^2/2n)$-dense, where $\delta$ is the minimum degree on $A$ and $n = |B|$.

We will also use the following elementary estimate.

\begin{lemma} \label{lem:ordered_pairs}
Let $G$ be a bipartite graph with bipartition $A\cup B$, $|B|=n$,
and minimum degree at least $\delta$ on the vertices in $A$. Then, for any subset $U\subseteq A$ with $\delta|U|\geq 2n$,
\begin{align*}
\sum_{uv\in\binom{U}{2}}d(u,v)\geq \frac{1}{4}\sum_{(u,v)\in U^2} d(u,v).
\end{align*}
\end{lemma}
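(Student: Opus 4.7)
The plan is a short double-counting argument followed by an application of Cauchy--Schwarz. With the natural convention $d(u,u)=|N(u)\cap N(u)|=d(u)$, for each $b\in B$ define $d_U(b):=|N(b)\cap U|$. Counting ordered (respectively unordered) pairs in $U\times U$ sharing a common neighbour in $B$ gives the two identities
\[
\sum_{(u,v)\in U^2}d(u,v) \;=\; \sum_{b\in B} d_U(b)^2 \quad\text{and}\quad \sum_{uv\in\binom{U}{2}}d(u,v) \;=\; \sum_{b\in B}\binom{d_U(b)}{2}.
\]
Substituting both identities into the inequality to be proved and clearing constants, the claim reduces to
\[
\sum_{b\in B} d_U(b)^2 \;\geq\; 2\sum_{b\in B} d_U(b).
\]

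To verify this, set $S:=\sum_{b\in B} d_U(b)$. Cauchy--Schwarz gives $\sum_b d_U(b)^2 \geq S^2/|B|=S^2/n$. On the other hand, counting edges between $U$ and $B$ from the $U$-side yields $S = \sum_{u\in U} d(u) \geq \delta|U|\geq 2n$ by the minimum-degree assumption and the hypothesis on $U$. Hence $S^2/n \geq (2n)\cdot S/n = 2S$, which is exactly what is needed.

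There is no genuine obstacle here; the computation is essentially forced once one recognises that the threshold $\delta|U|\geq 2n$ is precisely the Cauchy--Schwarz threshold that allows the diagonal contribution $\sum_{u\in U} d(u)$ in the ordered-pair sum to be absorbed into the off-diagonal part (which is $2\sum_{uv\in\binom{U}{2}} d(u,v)$). In short, Lemma~\ref{lem:ordered_pairs} should be read as a companion to Lemma~\ref{lem:local_dense}: under the same density hypothesis, the ordered and unordered codegree sums are comparable up to a constant factor of $4$.
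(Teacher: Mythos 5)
Your proof is correct and takes essentially the same route as the paper: both reduce the claim (via the identity $\sum_{(u,v)\in U^2}d(u,v)=\sum_{b\in B}d_U(b)^2$ and the ordered-versus-unordered bookkeeping, whose difference is $e(G[U,B])=\sum_b d_U(b)$) to the inequality $\sum_b d_U(b)^2 \geq 2\sum_b d_U(b)$, and both deduce this from Cauchy--Schwarz (equivalently, convexity) together with $e(G[U,B])\geq \delta|U|\geq 2n$.
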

\begin{proof}
Let $G':=G[U,B]$.
Since
\[
\sum_{uv\in\binom{U}{2}}d(u,v)=\frac{1}{2} \left(\sum_{(u,v)\in U^2} d(u,v)-e(G')\right),
\]
it is enough to prove that $e(G')\leq \frac{1}{2}\sum_{(u,v)\in U^2} d(u,v)$.
But, by convexity,
\[
\sum_{(u,v)\in U^2} d(u,v)=\sum_{b\in B}\deg_{G'}(b)^2\geq \frac{1}{n}e(G')^2.
\]
Thus, $e(G')\geq \delta |U|\geq 2n$ implies the desired bound.
\end{proof}

The following simple lemma, which we will apply repeatedly, gives a condition under which a weighted graph cannot have too many edges of high weight.

\begin{lemma}\label{lem:heavy}
Let $W$ be a weighted graph on vertex set $A$ and let
\begin{align*}
F_M :=\left\{uv\in \binom{A}{2}: W(u,v)\geq M\right\}.
\end{align*}
If $\sum_{uv\in \binom{A}{2}} W(u,v)^2 \leq S$, then
\begin{align*}
\sum_{uv\in F_M} W(u,v)\leq \frac{S}{M}.
\end{align*}
\end{lemma}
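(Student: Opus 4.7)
The plan is to observe that this is essentially a one-line Markov-type inequality: each edge $uv$ in $F_M$ contributes to the sum $\sum W(u,v)$ at most $W(u,v)^2/M$, because by definition of $F_M$ the weight satisfies $W(u,v) \geq M$ and hence $W(u,v) \leq W(u,v)^2/M$.

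Concretely, I would start from the trivial identity $W(u,v) = W(u,v)^2/W(u,v)$ valid whenever $W(u,v) > 0$ (and note that edges with $W(u,v) = 0$ contribute nothing on either side, so can be ignored). For edges $uv \in F_M$ we have $W(u,v) \geq M$, so $1/W(u,v) \leq 1/M$, giving
\[
\sum_{uv \in F_M} W(u,v) \;=\; \sum_{uv \in F_M} \frac{W(u,v)^2}{W(u,v)} \;\leq\; \frac{1}{M}\sum_{uv \in F_M} W(u,v)^2.
\]
Extending the sum on the right to all of $\binom{A}{2}$ (which only increases it, since weights are nonnegative) and applying the hypothesis $\sum_{uv \in \binom{A}{2}} W(u,v)^2 \leq S$ yields $\sum_{uv \in F_M} W(u,v) \leq S/M$, as required.

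There is essentially no obstacle here; the only mild subtlety is to confirm that the weights may be assumed nonnegative (which is implicit throughout the paper, as weights come from codegrees) so that dropping the restriction to $F_M$ in the $W^2$-sum is legitimate. The argument is just the weighted analogue of the standard Markov / Chebyshev-style bound that controls the weight above threshold $M$ by the second moment divided by $M$.
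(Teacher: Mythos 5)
Your argument is correct and is essentially identical to the paper's one-line proof, which observes that $M \sum_{uv \in F_M} W(u,v) \leq \sum_{uv \in \binom{A}{2}} W(u,v)^2 \leq S$; your rewriting $W(u,v) \leq W(u,v)^2/M$ is just that inequality divided by $M$. Your side remark about nonnegativity of the weights is a fine sanity check but doesn't alter the substance.
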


\begin{proof}
Since
\[M \sum_{uv\in F_M} W(u,v) \leq \sum_{uv\in \binom{A}{2}} W(u,v)^2 \leq S,\]
the required inequality holds.
\end{proof}

Finally, we will need a lemma saying that if much of the weight in a weighted graph is distributed along edges with low weight, then there is a large subset such that every vertex in this subset is connected to many low weight edges.

\begin{lemma}\label{lem:large_nbd}
Let $W$ be a weighted graph on vertex set $A$ and let
\begin{align*}
G_M :=\left\{uv\in \binom{A}{2}: W(u,v)< M\right\}.
\end{align*}
If $\sum_{uv\in G_M}W(u,v)\geq d |A|^2$, then there exists $U\subseteq A$ with $|U|\geq d|A|/M$ such that, for each $u\in U$,
\begin{align*}
|\{v\in A: 0<W(u,v)<M\}|\geq d|A|/M.
\end{align*}
\end{lemma}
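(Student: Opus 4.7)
The plan is to show that the hypothesis forces the graph of ``light'' edges (those with $0 < W(u,v) < M$) to have many edges in total, and hence by averaging many vertices must be incident to many such edges. The set $U$ is then defined to be precisely this set of ``high light-degree'' vertices.

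In more detail, write $n = |A|$ and let
\[
L := \set{uv \in \tbinom{A}{2} : 0 < W(u,v) < M}.
\]
First I would observe that the zero-weight pairs in $G_M$ contribute nothing to $\sum_{uv \in G_M} W(u,v)$, so that sum equals $\sum_{uv \in L} W(u,v) \geq d n^2$. Since every term in the latter sum is strictly less than $M$, this yields the edge-count bound $|L| > d n^2 / M$. For each vertex $u \in A$, let $d^*(u) := |\set{v \in A : 0 < W(u,v) < M}|$ denote its light-degree; the bound on $|L|$ translates to
\[
\sum_{u \in A} d^*(u) \;=\; 2|L| \;>\; \frac{2 d n^2}{M}.
\]

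Now I would define $U := \set{u \in A : d^*(u) \geq d n / M}$, which is exactly the conclusion we want. To bound $|U|$ from below, split the degree sum according to membership in $U$. On the one hand, $\sum_{u \in U} d^*(u) \leq |U| \cdot n$ trivially. On the other hand, every $u \notin U$ satisfies $d^*(u) < dn/M$ by definition, and $|A \setminus U| \leq n$, so $\sum_{u \notin U} d^*(u) < n \cdot dn/M = dn^2/M$. Combining these with the lower bound on the total degree sum,
\[
\frac{2 d n^2}{M} \;<\; |U| \cdot n + \frac{d n^2}{M},
\]
which rearranges to $|U| > d n / M$, giving the claim.

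There is no real obstacle here: the argument is just a two-line averaging, and the only subtlety to be careful about is that $G_M$ may contain zero-weight pairs, which one must exclude when passing from the edge-weight bound to a bound on the light-degrees $d^*(u)$. Because every step uses only strict counting inequalities, no extra loss is incurred in the final constant $dn/M$.
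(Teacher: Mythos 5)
Your proof is correct and takes essentially the same averaging approach as the paper: the paper thresholds on the truncated weighted degree $\sum_v W'(u,v)$ and converts to the unweighted light-degree at the end, whereas you convert the total weight to an edge count first and then threshold directly on the light-degree, but the underlying splitting argument is the same.
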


\begin{proof}
Let $W'$ be the truncated weighted graph on $A$ with respect to the support $G_M$, 
i.e., $W'(u,v)=W(u,v)$ if $uv\in G_M$ and $W'(u,v)=0$ otherwise. Define
\begin{align*}
U:=\left\{u\in A: \sum_{v\in A}W'(u,v) \geq d|A|\right\}.
\end{align*}
Then
\begin{align*}
2 d|A|^2 \leq \sum_{u,v\in A} W'(u,v)
=\sum_{u\in U}\sum_{v\in A}W'(u,v)+\sum_{u\notin U}\sum_{v\in A}W'(u,v)
\leq M|U||A| + d|A|^2,
\end{align*}
which gives $|U|\geq d|A|/M$. For each $u\in U$,
\begin{align*}
d|A|\leq \sum_{v\in A}W'(u,v) \leq M|\{v\in A: 0<W(u,v)<M\}|,
\end{align*}
which implies the required estimate.
\end{proof}

\section{The key lemma} \label{sec:DRC}

Arguably the core of our proof is the following lemma, established by a variant of the dependent random choice technique, which says that if $G$ is a sufficiently large graph which contains significantly more $C_4$'s than one would typically expect in a random graph of the same density, then $G$ contains a copy of any fixed bipartite graph $H$ with maximum degree $2$ on one side. 

\begin{lemma}\label{lem:manyC4}
Let $H$ be a bipartite graph with maximum degree $2$ on one side and let $c$, $\varepsilon$, $K$ and $C$ be positive constants. Then there exists $n_0$ such that if $n \geq n_0$ and $G$ is an $n$-vertex $K$-almost-regular balanced bipartite graph on $A\cup B$ such that $e(G)=Cn^{3/2-c}$ and 
\begin{align*}
|\Hom(C_4,G)|\geq n^{2-2c+\varepsilon},
\end{align*}
then $G$ contains a copy of $H$.
\end{lemma}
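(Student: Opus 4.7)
The plan is to find a subset $U\subseteq A$ of constant size at least $h:=|V(H)|$ such that every pair $u,v\in U$ has codegree $d(u,v)\geq h$, and then embed $H$ greedily. Given such a $U$, write the bipartition of $H$ as $X\cup Y$ with $Y$ of maximum degree $2$; inject $X$ into $U$ arbitrarily, then process the vertices of $Y$ one by one, embedding each $y\in Y$ into the common neighbourhood in $B$ of the images of its at most two $X$-neighbours, avoiding the fewer than $h$ previously used $B$-vertices. This succeeds precisely because every relevant pairwise codegree is at least $h$.

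To find $U$, we work with the neighbourhood weighted graph $W$ on $A$ defined by $W(u,v)=d(u,v)$. The $C_4$-homomorphism hypothesis translates to $\sum_{\{u,v\}}W(u,v)^2 \geq \tfrac12 n^{2-2c+\varepsilon}$, a factor of $n^{2c+\varepsilon}$ above what one would see in a pseudorandom graph of the same density. A vanilla dependent random choice, sampling $t$ vertices $b_1,\dots,b_t\in B$ uniformly and taking $U=\bigcap_i N(b_i)$, gives $\mathbb{E}[|U|]\asymp n^{1-t(1/2+c)}$ while the expected number of low-codegree pairs is roughly $T^t n^{2-t}$ for any fixed threshold $T$. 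Since $G$ lies below the Füredi--Alon--Krivelevich--Sudakov threshold by a factor of $n^c$, no integer $t$ makes both of these useful simultaneously, so plain DRC fails; the excess $C_4$ count must be leveraged to cover exactly this deficit.

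The novel variant I would try combines the preprocessing lemmas of Section~2 with the DRC step. First apply Lemma~\ref{lem:heavy} to $W$ with $S=\tfrac12|\Hom(C_4,G)|$ to cap off pairs of exceptionally large codegree, whose total weight is at most $S/M$ for a threshold $M$ to be chosen; almost all of $\sum W^2$ still comes from pairs of moderate codegree in a range $[T',M]$. Then apply Lemma~\ref{lem:large_nbd} to the truncated weighted graph to extract a large $U_0\subseteq A$ in which every vertex is incident to many pairs of moderate codegree. Finally, run DRC on $G[U_0,B]$: because the vertices of $U_0$ are already enriched for codegree-heavy structure, the $n^{\varepsilon}$ surplus in the $C_4$ count translates into an $n^\varepsilon$ boost in $\mathbb{E}[|U|]$ relative to the expected number of bad pairs, giving exactly the slack needed to produce, with positive probability, a constant-size $U\subseteq U_0$ with all pairwise codegrees at least $h$.

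The main obstacle is calibrating $T'$, $M$, and the sample size $t$ so that the savings from the preprocessing steps and the $n^\varepsilon$ from the hypothesis combine to close the DRC gap precisely, and so that the set produced by Lemma~\ref{lem:large_nbd} is still large enough for the subsequent sampling to see many edges per chosen vertex. The $K$-almost-regularity and bipartite balance of $G$ will be essential throughout, since they let us treat all vertices on each side as having essentially the same degree, making the DRC averages behave predictably.
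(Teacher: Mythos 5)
Your endgame (a constant-size set of vertices with all pairwise codegrees at least $h=|V(H)|$, followed by greedy embedding of the degree-$\leq 2$ side) matches the paper's, and your diagnosis of why vanilla DRC fails is exactly right. But the mechanism you propose for closing the gap does not, as described, actually close it, and the paper does something quite different. Concretely: after you trim high-codegree pairs via Lemma~\ref{lem:heavy} and extract $U_0 \subseteq A$ via Lemma~\ref{lem:large_nbd}, you propose to run DRC on $G[U_0,B]$ by sampling $b_1,\dots,b_t \in B$ and intersecting neighbourhoods. But restricting the $A$-side to $U_0$ does not change the degrees of vertices in $U_0$ into $B$, so $\mathbb{E}\bigl[\,\bigl|\bigcap_i N(b_i)\cap U_0\bigr|\,\bigr] \approx |U_0|\,p^t$ only shrinks relative to $|A|\,p^t$, while the expected number of bad pairs in $U_0$ scales as $|U_0|^2 (h/n)^t$; the same arithmetic that rules out plain DRC rules this out too unless the degrees into a smaller, carefully chosen $B$-side are boosted. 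You never restrict $B$, so the $n^{\varepsilon}$ surplus never enters the DRC count. There is also a smaller issue: Lemma~\ref{lem:large_nbd} needs a lower bound on $\sum W(u,v)$ over moderate pairs, whereas the hypothesis and Lemma~\ref{lem:heavy} control $\sum W(u,v)^2$, and passing from one to the other is not automatic. You acknowledge these calibration problems yourself, which is the gap.

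The paper's route (Lemma~\ref{lem:manyC4asym_2}) is structured differently and in a way that sidesteps exactly this obstacle. Instead of intersecting $t$ random $B$-neighbourhoods, one first averages the $C_4$-versus-$K_{1,2}$ inequality over $x\in A$ to pick a single vertex $x$ such that $B' := N(x)$ has internally many $C_4$'s relative to its edge count. Then one sets $A' := N(B')$ and does a dyadic pigeonhole on the degree of $A'$-vertices into $B'$ to find a slice $A_j$ that captures a $1/\log n$ fraction of $\sum_{u,v\in B'}d(u,v)$; the crucial inequality $2^j\geq M/2L$ shows vertices of $A_j$ have a large (constant) number of neighbours inside $B'$. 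A \emph{single} random $z\in A_j$ then has $N_{B'}(z)$ large and with a $1-o(1)$ fraction of pairs having codegree $\geq h$, and Tur\'an's theorem finishes. So the boost comes not from intersecting many neighbourhoods but from restricting the $B$-side to $B'=N(x)$, which is precisely what your plan omits. Note also that the paper neither uses Lemma~\ref{lem:heavy} nor Lemma~\ref{lem:large_nbd} in this proof (those appear only later, in the ``few $C_4$'s'' branch of the main theorems), and that it actually proves the stronger Lemma~\ref{lem:manyC4asym_2} requiring only a $C\log n$ excess of $C_4$'s over $K_{1,2}$'s, from which the polynomial-excess Lemma~\ref{lem:manyC4} follows trivially.
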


To get some feeling for these numbers, note that if $|\Hom(C_4,G)|\geq n^{2-2c+\varepsilon}$, then the number of homomorphic copies of $C_4$ in $G$ vastly exceeds the number of copies of $K_{1,2}$, which is $O(n^{2-2c})$, so there must be many true copies of $C_4$. This points the way to the following generalisation, from which Lemma~\ref{lem:manyC4} clearly follows.

Recall that $\Hom(H, G)$ denotes the set of all homomorphisms from $H$ to $G$, that is, maps $\phi: V(H) \rightarrow V(G)$ such that $\phi(u) \phi(v) \in E(G)$ whenever $uv \in E(H)$. We will use $\Hom^*(K_{2,1},G)$ to denote the number of homomorphisms from $K_{2,1}$ to $G$ oriented so that the pair of vertices is always placed in $A$ and the single vertex in $B$. Similarly, $\Hom^*(K_{1,2}, G)$ will denote the number of homomorphisms oriented so that the pair of vertices is always placed in $B$ and the single vertex in $A$. We will also use the standard notation $N(x)$ for the set of neighbours of a vertex $x$ and $N(X)$ for the set of vertices adjacent to some vertex in a set $X$. Here and throughout, we will use $\log$ to denote the logarithm base $2$.

\begin{lemma}\label{lem:manyC4asym_2}
Let $H$ be a bipartite graph with maximum degree $2$ on one side. Then there exists $C$ such that if $G$ is a bipartite graph on $A\cup B$ with $n=|B| \geq |A|$, $M=C\log n$ and 
\begin{align*}
|\Hom(C_4,G)|\geq M |\Hom(K_{1,2},G)|,
\end{align*}
then $G$ contains a copy of $H$.
\end{lemma}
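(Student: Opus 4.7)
Let $r := |V(H)|$ and write $V(H) = X \cup Y$ where $X$ is the side of maximum degree at most $2$. It suffices to locate a set $U \subseteq B$ with $|U| \geq r$ in which every pair of distinct vertices has codegree at least $r$: given such a $U$, fix any injection $\phi \colon Y \hookrightarrow U$ and then greedily embed each $x \in X$ as a previously unused vertex of $\bigcap_{y \in N_H(x)} N(\phi(y))$. Since $|N_H(x)| \leq 2$, this common neighbourhood has size at least $r$, and fewer than $r$ vertices of $A$ have been used by the time we embed $x$, so the greedy step succeeds.

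After swapping $A$ and $B$ if necessary (the case in which pairs in $A^2$ contribute the larger share of $|\Hom(C_4,G)|$ is entirely symmetric), we may assume
\[
\sum_{(b,b') \in B^2} d(b,b')^2 \;\geq\; \tfrac{1}{2}|\Hom(C_4,G)| \;\geq\; \tfrac{M}{2}\sum_{(b,b') \in B^2} d(b,b').
\]
Partition the ordered pairs of $B$ dyadically by $L_k := \{(b,b') : 2^k \leq d(b,b') < 2^{k+1}\}$ for $0 \leq k \leq K := \lceil \log |A| \rceil$. Up to absolute constants the two displayed sums match $\sum_k |L_k| 2^{2k}$ and $\sum_k |L_k| 2^k$ respectively, so $\sum_k |L_k| 2^{2k} \geq \tfrac{M}{16}\sum_k |L_k| 2^k$. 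Pigeonholing over the $K+1 = O(\log n)$ levels and using $M = C\log n$ produces an index $k^*$ for which
\[
|L_{k^*}|\,2^{2k^*} \;\geq\; c_1 C\sum_k |L_k|\,2^k
\]
for some absolute constant $c_1 > 0$, and comparing the two sides at $k^*$ alone yields $2^{k^*} \geq c_1 C$. Choosing $C$ large enough in $r$ forces $2^{k^*} \geq r$.

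To extract $U$, we apply a dependent random choice variant: sample $a_1, \ldots, a_t \in A$ independently and uniformly at random, for an integer $t$ calibrated to $k^*$, and set $U := \bigcap_{i=1}^{t} N(a_i) \subseteq B$. Writing $\EE[|U|^2] = |A|^{-t}\sum_{(b,b')} d(b,b')^t$, the heavy-level lower bound yields $\EE[|U|^2] \geq |L_{k^*}|\,(2^{k^*}/|A|)^t$, while the expected number of ``bad'' pairs is controlled by $\EE[\#\{(b,b') \in U^2 : d(b,b') < r\}] \leq r^t |B|^2/|A|^t$. Combined with the bound on $|L_{k^*}|\,2^{2k^*}$, this forces the expected number of pairs of codegree $\geq 2^{k^*} \geq r$ in $U^2$ to exceed the expected number of bad pairs by a margin of at least $r$. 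A standard alteration (delete one vertex from each surviving bad pair) then produces a realisation of $U$ with $|U| \geq r$ and every pairwise codegree at least $r$, completing the embedding by the reduction above.

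The main technical obstacle is the joint calibration of $t$ and $k^*$: taking $t$ too small leaves $|U|$ dominated by bad pairs, while taking $t$ too large shrinks $|U|$ below $r$. The extra factor of $\log n$ in $M = C\log n$ is exactly what is spent on the dyadic pigeonhole so that $2^{k^*}$ remains an absolute constant multiple of $C$; making the first-moment estimates close despite the potentially very sparse $G$ is where the novel variant of dependent random choice, flagged in the introduction, is employed.
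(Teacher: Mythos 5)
The overall strategy is the paper's: both reduce the problem to locating a set $U \subseteq B$ of size $\geq |V(H)|$ in which every pair has codegree $\geq |V(H)|$, and both use a dyadic pigeonhole plus dependent random choice to find it. But the key quantitative step of your argument is asserted, not proved, and in fact does not hold.

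Concretely, you claim that after the dyadic pigeonhole ``this forces the expected number of pairs of codegree $\geq 2^{k^*}$ in $U^2$ to exceed the expected number of bad pairs by a margin of at least $r$.'' Nothing you have established implies this, and it can fail for \emph{every} choice of $t$. Take $A = A_1 \cup A_2$ and $B = B_1 \cup B_2$ with $|A_i| = |B_i| = n$, put a disjoint union of $n/m$ copies of $K_{m,m}$ on $A_1 \cup B_1$ with $m = \Theta(C\log n)$, and put a sparse quasi-random $d$-regular bipartite graph on $A_2 \cup B_2$ with $d = m$. Then $|\Hom(C_4,G)| = \Theta(nm^3)$ and $|\Hom(K_{1,2},G)| = \Theta(nm^2)$, so the hypothesis $|\Hom(C_4,G)| \geq M|\Hom(K_{1,2},G)|$ holds. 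The only heavy level is $2^{k^*} \approx m$ with $|L_{k^*}| \approx nm$, while there are $\Theta(nm^2)$ bad pairs of codegree $1$ sitting in $B_2^2$. For $t=1$ one gets $\EE[\#\text{heavy}] \approx \EE[\#\text{bad}] \approx m^2$, so the difference is not $\geq r$; for $t \geq 2$ one gets $\EE[\#\text{heavy}] \approx m^{t+1}/n^{t-1} \to 0$, so the difference is $o(1)$. The obstruction is that a global heavy-tail condition on codegrees does not prevent a large ``sparse part'' of the graph from contaminating $U$ with bad pairs at exactly the scale where $U$ is still nonempty. Your proof would also need a lower bound on $\EE[|U|]$, which is never established either.

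The paper avoids this by a two-stage localization that you have dropped. First it averages the inequality over $a \in A$ to find a single vertex $x$ for which $\sum_{u,v\in N(x)} d(u,v) \geq M\bigl(\sum_{v\in N(x)}\deg(v) + \deg(x)^2\bigr)$. It then works entirely inside $B' = N(x)$ and $A' = N(B')$; the dyadic pigeonhole is applied to the degree of $A'$-vertices \emph{into $B'$} (not to codegrees of pairs in $B^2$), and the crucial bound $M|B'|^2 \leq \sum_{u,v\in B'}d(u,v)$ (coming from the $\deg(x)^2$ term) ensures that bad pairs form only an $O(1/M)$ fraction of what a random vertex of the chosen dyadic level sees. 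A single random $z$ then already gives, after a Tur\'an-type cleanup, a large subset of $B'$ with all pairwise codegrees $\geq |V(H)|$. The localization is what makes the ``bad versus heavy'' comparison go through, and it is exactly what is missing from your argument.

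Two minor points: the ``after swapping $A$ and $B$ if necessary'' step is vacuous, since $\sum_{(a,a')\in A^2}d(a,a')^2 = \sum_{(b,b')\in B^2}d(b,b')^2$ identically in a bipartite graph; and the alteration step should really compare $\EE[|U|]$ (not $\EE[\#\text{heavy}]$) against $\EE[\#\text{bad}]$, since deleting one vertex per bad pair costs you vertices, not heavy pairs.
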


\begin{proof}
Note first that $|\Hom^*(K_{1,2},G)| =\sum_{a\in A} \deg(v)^2$,
\begin{align*}
|\Hom(C_4,G)|=\sum_{a\in A}\sum_{u,v\in N(a)} d(u,v)~~\text{ and }~~ 
|\Hom^*(K_{2,1},G)| =\sum_{a\in A} \sum_{v\in N(a)} \deg(v)
\end{align*}
and, thus,
\begin{align*}
\EE_{a\in A} \left[\sum_{u,v\in N(a)}d(u,v)
-M\left(\sum_{v\in N(a)}\deg(v) + \deg(a)^2\right)\right]\geq 0.
\end{align*}
Hence, there exists $x\in A$ such that
\begin{align}\label{eq:K12vsC4}
\sum_{u,v\in N(x)} d(u,v)\geq M\left(\sum_{v\in N(x)}\deg(v)
+\deg(x)^2\right).
\end{align}

Let $B':=N(x)$ and $A':=N(B')$. Define the dyadic partition $A_1\cup A_2\cup\cdots\cup A_L$ of $A'$ with $L = \lfloor \log |B'| \rfloor + 1 \leq 2\log n$ such that
\begin{align*}
A_i:=\{a\in A':2^{i-1}\leq\deg_{B'}(a)<2^i\}
\end{align*}
for each $i=1,2,\cdots,L$.
As $\sum_{a\in A'} \deg_{B'}(a)^2 = \sum_{u,v\in B'} d(u,v)$, by averaging there exists $j$ such that 
$\sum_{a\in A_j} \deg_{B'}(a)^2\geq \frac{1}{L}\sum_{u,v\in B'} d(u,v)$.
Since $\sum_{a\in A_j} \deg_{B'}(a)^2 < 2^{2j} |A_j|$, we have the bound 
\begin{align}\label{eq:dyadic_lower_asym}
|A_j|\geq \frac{1}{2^{2j}L}\sum_{u,v\in B'} d(u,v).
\end{align}
On the other hand, as $e(G[A'\cup B']) \leq \sum_{b\in B'}\deg(b)$, \eqref{eq:K12vsC4} implies that
\begin{align*}
\sum_{u,v\in B'} d(u,v)\geq M\sum_{b\in B'}\deg(b) \geq M|A_j|2^{j-1}.
\end{align*}
Combining this with \eqref{eq:dyadic_lower_asym}, it follows that
\begin{align}\label{eq:neighbour_large}
 2^j \geq M/2L.
\end{align}

We now apply dependent random choice to the graph $G'=G[A_j,B']$.
Let $h=|V(H)|$. 
We say that a pair of distinct vertices $u,v\in B'$ is \emph{bad} if $d(u,v)<h$.
Pick a random vertex $z\in A_j$ and let $X$ be the number of bad pairs in $N_{B'}(z)$, the neighbourhood of $z$ in $B'$. 
Then, using \eqref{eq:dyadic_lower_asym}, 
\begin{align*}
\EE[X]=\sum_{\text{bad }u,v\in B'} \PP[u,v\in N(z)]< \frac{h}{|A_j|}|B'|^2
\leq \frac{hL 2^{2j}|B'|^2}{\sum_{u,v\in B'}d(u,v)}.
\end{align*}
Since $M|B'|^2\leq \sum_{u,v\in B'}d(u,v)$ by \eqref{eq:K12vsC4}, we obtain
\begin{align*}
\EE[X]< \frac{hL2^{2j}}{M}.
\end{align*}
Therefore, the expected proportion of bad (ordered) pairs in $N_{B'}(z)$ is small, i.e., 
\begin{align*}
\EE\left[ \frac{X}{|N_{B'}(z)|(|N_{B'}(z)|-1)}\right] < \frac{hL2^{2j}}{M}\cdot\frac{1}{2^{2j-3}}
\leq 8 hL/M.
\end{align*}
For $C \geq 16 h^3$, there must then exist a choice of $z$ for which 
\[\frac{X}{|N_{B'}(z)|(|N_{B'}(z)|-1)} < 8 hL/M \leq 1/h^2.\]
Fixing such a $z$, we see, by \eqref{eq:neighbour_large}, that we can make $N_{B'}(z)$ as large as we wish by choosing $C$ sufficiently large. Moreover, the auxiliary graph on $N_{B'}(z)$ whose edge set consists of non-bad pairs has density larger than $1-1/(h-1)$. By Tur\'an's theorem, when $N_{B'}(z)$ is sufficiently large, this is enough to guarantee a copy of $K_h$ induced on vertex set $\{b_1,b_2,\cdots,b_h\}\subseteq N_{B'}(z)$. Since each $d(b_i,b_j)\geq h$, we can now easily find a copy of $H$ while avoiding degeneracy.
\end{proof}

\section{Subdivisions of bipartite graphs}

As a warm-up to the main result, we now prove Theorem~\ref{thm:1subdiv} for subdivisions of bipartite graphs. This is considerably simpler than the general case and we obtain a significantly better bound. To optimise the argument, we will need the following auxiliary lemma. 

\begin{lemma} \label{lem:inj}
Suppose that $G$ is a graph with $n$ vertices and $m \geq n^{2 - 1/t + \delta}$ edges for some positive constant $\delta$. If $G$ contains $\lambda n^{s+t}$ labelled copies of $K_{s,t}$, where $s \leq t$, then it contains at most $(1 + o(1)) \lambda^{e(L)/st} n^{|L|}$ labelled copies of any subgraph $L$ of $K_{s,t}$.
\end{lemma}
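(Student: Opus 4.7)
The plan is to reduce the statement to a Sidorenko-type upper bound on homomorphism counts, and then pass to labelled (injective) copies using the density hypothesis. The intermediate inequality I would establish is that for every $L \subseteq K_{s,t}$,
\[
\Hom(L, G) \leq \lambda^{e(L)/st}\, n^{|L|},
\]
equivalently $t(L, G) \leq t(K_{s,t}, G)^{e(L)/st}$, where $t(H, G) = \Hom(H, G)/n^{|H|}$. Note that for random graphs of edge density $p$, both sides equal $p^{e(L)}$ (via $\lambda = p^{st}$), so the inequality is tight up to lower-order terms precisely at Sidorenko equality for $K_{s,t}$.

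To prove the density inequality, I would use iterated Cauchy--Schwarz via the standard identity
\[
\Hom(H, G)^2 \leq \Hom(H - v, G)\cdot \Hom(H^{(v)}, G),
\]
where $H^{(v)}$ is $H$ with the vertex $v$ duplicated by a twin of identical neighbourhood. Applied for example to $L = K_{1,2} \subseteq K_{2,2}$ with $v$ the central vertex, one step gives $\Hom(K_{1,2}, G)^2 \leq n^2 \Hom(K_{2,2}, G)$, which is exactly the case $s=t=2$, $e(L)=2$ of the claim. For general $L$, a sequence of such doublings eventually produces a graph containing $K_{s,t}$ as a spanning subgraph; combining the chain of Cauchy--Schwarz bounds with the power-mean monotonicity $t(K_{s', t'}, G) \leq t(K_{s, t}, G)^{s't'/(st)}$ for $s' \leq s$, $t' \leq t$ recovers the correct exponent $e(L)/st$. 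A cleaner alternative is a single application of H\"older's inequality via the $S_s \times S_t$-action on $V(K_{s,t})$: the orbit of $L$ under this action covers each edge of $K_{s,t}$ exactly $s!\,t!\,e(L)/(st)$ times, and averaging yields $t(L, G)^{st} \leq t(K_{s,t}, G)^{e(L)}$.

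To pass from homomorphism counts to labelled copies, note that the number of non-injective homomorphisms $V(L) \to V(G)$ is at most $\binom{|L|}{2}\, n^{|L|-1}$. Under the density hypothesis $m \geq n^{2 - 1/t + \delta}$, Sidorenko's inequality for $K_{s,t}$ gives $\lambda \geq (2m/n^2)^{st} \geq n^{-s + \delta st}$, so the main term $\lambda^{e(L)/st}\, n^{|L|}$ is at least $n^{|L| - e(L)/t + \delta e(L)}$, which polynomially dominates the error $n^{|L|-1}$ for any $e(L) \geq 1$ and $\delta > 0$. This yields the claimed $(1+o(1))$ correction.

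The main obstacle is the density inequality of the first step. The iterated Cauchy--Schwarz approach requires a careful choice of doubling sequence so that the inflated graph is eventually contained in (or contains) $K_{s,t}$, and the power-mean step must be combined correctly so the exponents compose to $e(L)/st$; the H\"older/symmetrisation route is conceptually cleaner but requires delicate exponent bookkeeping. Once the density inequality is in hand, the transfer to labelled copies is routine given the density hypothesis.
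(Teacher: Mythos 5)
The central gap is your identification of $\lambda$ with $t(K_{s,t},G)$. You assert that the density inequality $\Hom(L,G)\leq\lambda^{e(L)/st}n^{|L|}$ is ``equivalently'' $t(L,G)\leq t(K_{s,t},G)^{e(L)/st}$, but these differ in precisely the way that matters: $\lambda$ is the density of \emph{injective} (labelled) copies of $K_{s,t}$, whereas $t(K_{s,t},G)=|\Hom(K_{s,t},G)|/n^{s+t}$ counts all homomorphisms, including degenerate ones. Since labelled copies are always at most homomorphisms, $\lambda\leq t(K_{s,t},G)$, so the weakly norming inequality bounds $\Hom(L,G)$ by $t(K_{s,t},G)^{e(L)/st}n^{|L|}$, which is at \emph{least} as large as the target $\lambda^{e(L)/st}n^{|L|}$. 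To close this gap one must prove the reverse estimate $|\Hom(K_{s,t},G)|\leq(1+o(1))\lambda n^{s+t}$, and establishing this is the entire substance of the paper's proof: it compares the number of labelled copies of $K_{s,t-1}$ (and $K_{s-1,t}$) to that of $K_{s,t}$ by splitting the $s$-tuples $S$ according to whether $|N(S)|\leq\log n$, and shows each degeneration loses a polynomial factor. Your proposal never establishes this comparison, so even granting the Cauchy--Schwarz/H\"older machinery for the weakly norming step, the claimed conclusion does not follow.

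The final paragraph of your sketch compounds the confusion. Bounding non-injective homomorphisms of $L$ by $\binom{|L|}{2}n^{|L|-1}$ is addressing a non-issue (labelled copies of $L$ are trivially at most $\Hom(L,G)$), and even transplanted to the quantity that actually needs bounding, namely $|\Hom(K_{s,t},G)|-(\text{labelled }K_{s,t})$, this crude estimate is insufficient. Comparing $n^{s+t-1}$ against $\lambda n^{s+t}\geq n^{t+\delta st}$ requires $\delta>(s-1)/(st)$; since $\delta$ is a fixed but arbitrarily small positive constant, this fails in general. The same defect appears in your claim that $n^{|L|-e(L)/t+\delta e(L)}$ dominates $n^{|L|-1}$ ``for any $e(L)\geq1$ and $\delta>0$'': one needs $e(L)(1/t-\delta)<1$, which is false when $e(L)$ is close to $st$ and $\delta$ is small. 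This is exactly why the paper's more delicate dyadic argument, rather than a one-line degeneracy count, is required.
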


\begin{proof}
Let $\mathcal{S}$ be the set of ordered $s$-tuples of distinct vertices in $G$. Then, for $n$ sufficiently large, the number of labelled copies of $K_{s,t}$ in $G$ is
\begin{align*}
\sum_{S \in \mathcal{S}} t! \binom{|N(S)|}{t} & \geq s! t! \binom{n}{s} \binom{\mathbb{E}_S |N(S)|}{t} = s! t! \binom{n}{s} \binom{\sum_v \binom{\deg(v)}{s}/\binom{n}{s}}{t}\\
& \geq s! t! \binom{n}{s} \binom{n \binom{\mathbb{E}_v \deg(v)}{s}/\binom{n}{s}}{t} \geq s! t! \binom{n}{s} \binom{n^{\delta s}}{t},
\end{align*}
where we used convexity twice and in the last inequality that $\mathbb{E}_v \deg(v) \geq 2n^{1-1/s + \delta}.$

Let $\ell = \log n$ and let $\mathcal{S}'$ be the subset of $\mathcal{S}$ consisting of those $S$ for which $|N(S)| \leq \ell$. Then the number of labelled copies of $K_{s,t-1}$ in $G$ satisfies
\begin{align*}
\sum_{S \in \mathcal{S}} (t-1)! \binom{|N(S)|}{t-1} & = \sum_{S \in \mathcal{S} \setminus \mathcal{S}'} (t-1)! \binom{|N(S)|}{t-1} + \sum_{S \in \mathcal{S}'} (t-1)! \binom{|N(S)|}{t-1} \\
& \leq \frac{1}{\ell - t} \sum_{S \in \mathcal{S}} t! \binom{|N(S)|}{t} + s! t! \binom{n}{s} \binom{\ell}{t}\\
& = o\left(\sum_{S \in \mathcal{S}} t! \binom{|N(S)|}{t} \right).
\end{align*}
That is, the number of labelled copies of $K_{s,t-1}$ in $G$ is of asymptotically lower order than the number of labelled copies of $K_{s,t}$ in $G$. A similar argument also works for estimating the number of labelled copies of $K_{s-1,t}$ in $G$ by using the assumption $m \geq n^{2 - 1/t + \delta}$. Using these observations repeatedly, we may then conclude that the number of labelled copies of $K_{s,t}$ in $G$ is equal to $(1 + o(1))|\Hom(K_{s,t}, G)|$.

To finish the proof, we use the fact that $K_{s,t}$ is a weakly norming graph (see~\cite{CL16, H10}), which implies that
\[\left(\frac{|\Hom(L, G)|}{|G|^{|L|}}\right)^{1/e(L)} \leq \left(\frac{|\Hom(K_{s,t}, G)|}{|G|^{s+t}}\right)^{1/st}\]
for any subgraph $L$ of $K_{s,t}$. Substituting in $|\Hom(K_{s,t}, G)| = (1 + o(1)) \lambda n^{s+t}$ then shows that
\[|\Hom(L, G)| \leq (1+ o(1)) \lambda^{e(L)/st} n^{|L|},\]
which yields the required conclusion.
\end{proof}

We now prove the main result of this section.

\begin{theorem}\label{thm:bipartite}
For any $2\leq s\leq t$, there exists a constant $C$ such that if $H_{s,t}$ is the subdivision of the complete bipartite graph $K_{s,t}$, then $\ex(n,H_{s,t})\leq Cn^{3/2-1/12t}$.
\end{theorem}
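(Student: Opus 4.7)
The plan is to follow the dichotomy suggested in the introduction, combining the regularization of Lemma~\ref{lem:regular}, the $C_4$-rich embedding result of Lemma~\ref{lem:manyC4}, and the injective counting estimate of Lemma~\ref{lem:inj}. Write $c := 1/(12t)$.

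\emph{Setup and dichotomy.} Given an $n$-vertex graph with at least $Cn^{3/2-c}$ edges, invoke Lemma~\ref{lem:regular} with $\alpha = 1/2 - c$ to pass to a $K$-almost-regular balanced bipartite subgraph $G$ on $A \cup B$ with $|A|, |B|$ of order $m$, $e(G) \gtrsim m^{3/2-c}$, and every degree of order $m^{1/2-c}$. Fix a small $\varepsilon > 0$. If $|\Hom(C_4, G)| \geq m^{2-2c+\varepsilon}$, then Lemma~\ref{lem:manyC4} applied with $H = H_{s,t}$---which is bipartite with maximum degree $2$ on the side containing the subdivision vertices---gives $H_{s,t} \subseteq G$. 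Otherwise $\sum_{u,v \in A} d(u,v)^2 = |\Hom(C_4,G)| < m^{2-2c+\varepsilon}$, which is the case we now address.

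\emph{A Sidorenko lower bound on $|\Hom(H_{s,t}, G)|$.} Placing $\ell_1,\dots,\ell_s,r_1,\dots,r_t$ in $A$ and each subdivision vertex $p_{ij}$ in $B$, and summing out the subdivision vertices, yields
\[
|\Hom(H_{s,t}, G)| \;=\; \sum_{\ell_1,\dots,\ell_s,r_1,\dots,r_t \in A}\ \prod_{i,j} d(\ell_i, r_j),
\]
which is exactly the weighted $K_{s,t}$-homomorphism count in the bipartite weighted graph on $A \times A$ with edge weight $d(u,v)$. Since $K_{s,t}$ is Sidorenko, this is at least $(W/m^2)^{st}\, m^{s+t}$, where $W = \sum_{u,v} d(u,v) = \sum_{b \in B} \deg(b)^2$ is of order $m^{2-2c}$ by almost-regularity. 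Hence $|\Hom(H_{s,t}, G)| \gtrsim m^{s+t - 2cst}$.

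\emph{Removing degeneracy.} By Cauchy--Schwarz applied to $\sum d \asymp m^{2-2c}$ against $\sum d^2 \leq m^{2-2c+\varepsilon}$, the unweighted codegree support graph $\tilde G^{*}$ on $A$ has at least $m^{2-2c-\varepsilon}$ edges. The choice $c = 1/(12t)$ guarantees, for small enough $\varepsilon, \delta > 0$, that this exceeds $m^{2-1/t+\delta}$, so Lemma~\ref{lem:inj} applies to $\tilde G^{*}$ and controls the labeled count of every subgraph $L \subseteq K_{s,t}$ by $\lambda^{e(L)/st}\, m^{|L|}$ with $\lambda = m^{-(2c+\varepsilon)st}$. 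Every non-injective homomorphism $H_{s,t} \to G$ arises by identifying two vertices of $H_{s,t}$---two $\ell$'s, two $r$'s, an $\ell$ with an $r$, or two subdivision vertices (the original/subdivision case is ruled out by the $A$--$B$ bipartition)---and each identification factors the corresponding sum into a labeled $L$-count in $\tilde G^{*}$ for some $L \subsetneq K_{s,t}$ (bounded via Lemma~\ref{lem:inj}) times residual codegree or degree factors bounded by $K$-almost-regularity. A case analysis shows each such contribution is $o(m^{s+t-2cst})$, so an injective copy of $H_{s,t}$ exists.

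The \emph{main obstacle} is the collision of two subdivision vertices, whose quotient graph is not naturally a subgraph of $K_{s,t}$. The remedy is to sum out one codegree factor at a time, reducing to labeled $K_{s-1,t}$ or $K_{s,t-1}$ counts in $\tilde G^{*}$ to which Lemma~\ref{lem:inj} directly applies; the constant $\tfrac{1}{12}$ in $c = 1/(12t)$ is calibrated precisely so that the resulting degeneracy bounds fall strictly below the main Sidorenko lower bound.
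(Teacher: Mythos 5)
Your proposal shares the overall skeleton of the paper's proof (regularization via Lemma~\ref{lem:regular}, the dichotomy on $|\Hom(C_4,G)|$, and Lemma~\ref{lem:manyC4} in the $C_4$-rich case), but in the $C_4$-poor case it diverges from the paper in a way that leaves a genuine gap.

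The paper does \emph{not} lower-bound $|\Hom(H_{s,t},G)|$ by Sidorenko on the full weighted neighbourhood graph. Instead it first truncates: it isolates the heavy pairs $F_\varepsilon = \{uv : d(u,v)\geq n^{2\varepsilon}\}$, uses Lemma~\ref{lem:heavy} to show that deleting them costs only a $(1-o(1))$ factor of the total weight, and then works exclusively inside the \emph{simple} graph $G_A'$ whose edges are pairs with $0<d(u,v)<n^{2\varepsilon}$. The supersaturated $K_{s,t}$'s are found in $G_A'$, so every non-subdividing vertex of $H_{s,t}$ is automatically placed injectively, and every codegree factor used to place a subdividing vertex is at most $n^{2\varepsilon}$. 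That uniform $n^{2\varepsilon}$ bound on the codegrees is exactly what makes the degeneracy count tractable: a degenerate extension containing a $K_{1,3}$ can be bounded by $O(n^{5/2-3c})$ choices of $K_{1,3}$ times a labelled $K_{s-2,t-1}$ or $K_{s-1,t-2}$ count in $G_A'$ (controlled by Lemma~\ref{lem:inj}) times $n^{2\varepsilon(st-2)}$ for the remaining subdivision vertices.

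Your version skips the truncation. You lower-bound $|\Hom(H_{s,t},G)| = \sum \prod_{i,j} d(\ell_i,r_j)$ by Sidorenko over the full weighted graph, then try to upper-bound the degenerate homomorphisms by ``factoring into a labelled $L$-count in $\tilde G^*$ times residual codegree or degree factors bounded by $K$-almost-regularity.'' This does not go through. First, the degenerate contributions are still weighted sums $\sum \prod d(\ell_i,r_j)$, not labelled subgraph counts in the support graph $\tilde G^*$; to replace a codegree factor by a membership indicator you need a bound on that codegree, and $K$-almost-regularity only gives $d(u,v)\leq Kpn = O(n^{1/2-c})$, which is far too lossy (already for the case $\ell_1=\ell_2$ the crude bound $m^{s+t-1}(Kpn)^{st}$ dwarfs the Sidorenko lower bound $m^{s+t-2cst}$). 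Second, the Sidorenko lower bound itself mixes contributions from heavy pairs, so even a sharp bound on degenerate extensions over light pairs would not immediately imply that a non-degenerate homomorphism exists: consistent with $\sum d^2 < m^{2-2c+\varepsilon}$, a small set of pairs can have codegree as large as $m^{1-c+\varepsilon/2}$, and the Sidorenko estimate gives no information about where the homomorphism mass is concentrated. Your closing remark about ``summing out one codegree factor at a time'' does not address either point, because without a $n^{O(\varepsilon)}$ cap on codegrees the factored pieces are not labelled counts in $\tilde G^*$. The heavy/light split using Lemma~\ref{lem:heavy}, and passing from the weighted graph to the truncated simple graph $G_A'$ \emph{before} any $K_{s,t}$ counting, is the essential step you are missing; once that is inserted, your Cauchy--Schwarz remark about $\tilde G^*$, the application of Lemma~\ref{lem:inj}, and the choice $c=1/(12t)$ can all be salvaged and the argument converges to the paper's proof.

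Two smaller issues. (i) Lemma~\ref{lem:inj} is applied with a declared $\lambda = m^{-(2c+\varepsilon)st}$, but $\lambda$ must be obtained as the actual $K_{s,t}$-density of the graph you apply the lemma to; the paper derives it from K\H{o}v\'ari--S\'os--Tur\'an together with Erd\H{o}s--Simonovits supersaturation on $G_A'$, and you should do the same on your $\tilde G^*$ (or better, on the truncated graph). (ii) A useful feature of the paper's construction you lose by starting from Sidorenko is that injectivity on the $s+t$ non-subdividing vertices is automatic, so only subdivision-vertex collisions need to be excluded; starting from the weighted homomorphism count you must also rule out collisions among $\ell$'s and $r$'s, adding cases that your sketch does not actually carry out.
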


\begin{proof}
Let $G$ be an $n$-vertex graph with $Cn^{3/2-c}$ edges, where $c=1/(4s + 8t) \geq 1/12t$ and $C$ is taken sufficiently large. By taking $C n_0^{3/2 - c} \geq n_0^2$, we may assume that $n \geq n_0$ for $n_0$ sufficiently large and subsume any loss into the constant $C$. By Lemma~\ref{lem:regular}, we may also assume that $G$ is a $K$-almost-regular bipartite graph on $A\cup B$ with $\frac{1}{2}n\leq |A|\leq\frac{2}{3}n$ and $e(G)=C n^{3/2-c}$ for absolute constants $C$ and $K$. Note that the weighted neighbourhood graph $G_A$ on $A$ has total weight $d|A|^2$ with $d \geq \alpha n^{-2c}$ for some $\alpha>0$ depending on $C$.  

If $\sum_{uv\in\binom{A}{2}} d(u,v)^2\geq n^{2-2c+\varepsilon}$, then Lemma~\ref{lem:manyC4} implies that $G$ contains a copy of $H_{s, t}$ for $n$ sufficiently large. Hence, we may assume that $\sum_{uv\in\binom{A}{2}} d(u,v)^2\leq n^{2-2c+\varepsilon}$. If we define
\begin{align*}
F_\varepsilon:=\left\{uv\in \binom{A}{2}:d(u,v)\geq n^{2\varepsilon}\right\}
\end{align*}
to be the set of `heavily-weighted' edges, then Lemma~\ref{lem:heavy} gives that
\begin{align*}
\sum_{uv\in F_\varepsilon}d(u,v)\leq n^{2-2c-\varepsilon} \leq \beta d |A|^2 n^{-\varepsilon}
\end{align*}
for a positive constant $\beta$. Therefore, for $n$ sufficiently large,
\begin{align*}
\sum_{uv\notin F_\varepsilon} d(u,v) \geq (1-\beta n^{-\varepsilon}) d |A|^{2}\geq d |A|^2/2.
\end{align*}
In other words, by deleting the `heavily-weighted' edges in $F_\varepsilon$, we do not lose much weight. Let $G_A'$ be the (simple) graph obtained by `simplifying' the weighted graph $G_A\setminus F_\varepsilon$, taking $uv\in E(G_A')$ if and only if $0<d(u,v)<n^{2\varepsilon}$. Then $G_A'$ has at least $n^{-2 \varepsilon} d |A|^2/2 = \Omega(n^{2-2c-2\varepsilon})$ edges. 

By the K\H{o}v\'{a}ri--S\'{o}s--Tur\'{a}n theorem~\cite{KST54} and a supersaturation result of Erd\H{o}s and Simonovits~\cite{ESi83}, there exist $\lambda n^{s + t} = \Omega(q^{st}n^{s+t})$ labelled copies of $K_{s,t}$ in $G'_A$, where $q=2e(G_A')/|A|^2=\Omega(n^{-2c-2\varepsilon})$, provided $2(c+\varepsilon)<1/s$. Every such copy of $K_{s,t}$ extends to a homomorphism $\phi:V(H_{s,t})\rightarrow V(G)$ injective on the set of non-subdividing vertices, but possibly degenerate on the degree 2 vertices. 

Let $\Phi$ be the set of such degenerate homomorphisms extended from a copy of $K_{s,t}$ in $G_A'$. By definition, for each $\phi\in \Phi$, there exist two distinct edges $uv$ and $u'v'$ in $K_{s,t}$ such that the subdividing vertices $x_{uv}$ and $x_{u'v'}$ are mapped to the same vertex $b\in B$. Assuming, without loss of generality, that $u$, $v$ and $v'$ are all distinct, we have that $\phi(u),\phi(v)$ and $\phi(v')$ are all in the neighbourhood of $b$. Note now that there are at most $|B| (3K C n^{1/2 - c})^3=O(n^{5/2-3c})$ ways of choosing a $K_{1,3}$ with the central vertex in $B$ and each copy of $K_{1,3}$ extends in at most $M n^{(st-2)2\varepsilon}$ ways to a homomorphic copy of $H_{s,t}$, where $M$ is the number of labelled subgraphs of $G'_A$ equal to either $K_{s-2, t-1}$ or $K_{s-1, t-2}$ and we use the fact that there are at most $n^{2\varepsilon}$ choices for each subdividing vertex (because the copy of $K_{s,t}$ was in $G_A'$). By Lemma~\ref{lem:inj}, $M = O(\lambda^{(s-2)(t-1)/st} n^{s+t-3})$. Therefore, by the choice $c=1/(4s + 8t)$ and $\varepsilon=1/(7st)^2$, 
\begin{align*}
M n^{5/2 - 3c + 2(st-2)\varepsilon} = O(\lambda^{1 - (s+2t-2)/st} n^{s + t - 1/2 - 3c + 2(st-2)\varepsilon}) = o(\lambda n^{s+t}),
\end{align*}
where we used $\lambda = \Omega(q^{st}) = \Omega(n^{-(2c+2\varepsilon)st})$. 
Hence, there always exists a non-degenerate copy of $H_{s,t}$, as required. 
\end{proof}

The lower bound coming from a simple application of the probabilistic deletion method is $\ext(n, H_{s,t}) = \Omega_{s,t} (n^{3/2 - (s+t-3/2)/(2st - 1)}) \geq \Omega_{s,t}(n^{3/2 - 1/2s - 1/2t})$, so our bound is reasonably close to best possible. Indeed, when $s = t$, we have
\[c_t n^{3/2 - 1/t} \leq \ext(n, H_{t,t}) \leq C_t n^{3/2 - 1/12t}\]
for some positive constants $c_t$ and $C_t$. However, the true value of $\ext(n, H_{t,t})$ seems likely to lie somewhere strictly between these two extremes.

\section{The subdivision of $K_t$} 

Since every $t$-vertex $C_4$-free bipartite graph $H$ with maximum degree two on one side is a subgraph of the subdivision of $K_t$, the following theorem implies Theorem~\ref{thm:1subdiv}. 

\begin{theorem}\label{thm:K_t}
For every integer $t \geq 3$, there exists a constant $C$ such that if $H_t$ is the subdivision of $K_t$, then $\ext(n, H_t) \leq Cn^{3/2-1/6^t}$.
\end{theorem}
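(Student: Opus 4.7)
The plan is to adapt the proof of Theorem~\ref{thm:bipartite}, with the complete graph $K_t$ replacing the bipartite clique $K_{s,t}$. Let $c = 1/6^t$ and let $G$ be an $n$-vertex graph with $Cn^{3/2-c}$ edges; using Lemma~\ref{lem:regular}, I pass to a $K$-almost-regular balanced bipartite subgraph on $A\cup B$ with $\Omega(n^{3/2-c})$ edges. I dichotomize on the number of $4$-cycles. If $|\Hom(C_4,G)|\geq n^{2-2c+\varepsilon}$ for a small $\varepsilon>0$ chosen later, then Lemma~\ref{lem:manyC4} applied with $H=H_t$ --- which is bipartite, $C_4$-free, and has maximum degree $2$ on its subdividing side --- yields $H_t\subseteq G$ immediately.

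In the complementary case, $\sum_{uv\in\binom{A}{2}}d(u,v)^2\leq n^{2-2c+\varepsilon}$, and removing heavy codegrees (those $\geq n^{2\varepsilon}$) from the codegree graph on $A$ produces a simple graph $G_A'$ with $\Omega(n^{2-2c-2\varepsilon})$ edges and all remaining codegrees in $(0,n^{2\varepsilon})$. In place of the K\H{o}v\'ari--S\'os--Tur\'an-plus-supersaturation bound used for $K_{s,t}$ in Theorem~\ref{thm:bipartite}, I would invoke the Kruskal--Katona theorem on $G_A'$ to obtain $\Omega(n^{t(1-c-\varepsilon)}/t!)$ labeled copies of $K_t$. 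Each labeled $K_t$ extends to a homomorphism from $H_t$ to $G$ by choosing a subdividing vertex for each of its $\binom{t}{2}$ edges, and degenerate extensions (those collapsing two subdividing vertices to the same $b\in B$) are bounded via the $K_{1,3}$-counting of Theorem~\ref{thm:bipartite}: there are $O(n^{5/2-3c})$ copies of $K_{1,3}$ centered in $B$, each extending in at most $M\cdot n^{2\varepsilon(\binom{t}{2}-2)}$ ways to a degenerate $H_t$-homomorphism, where $M$ counts the completions of a fixed $3$-vertex set in $A$ to a $K_t$ in $G_A'$.

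The main obstacle is controlling $M$. The proof of Theorem~\ref{thm:bipartite} used Lemma~\ref{lem:inj}, which depends on $K_{s,t}$ being weakly norming; unfortunately $K_t$ is not weakly norming, so this tool is unavailable. Bounding $M$ through alternative means is where the argument becomes delicate, and this is presumably what forces the exponent $c$ to shrink by a factor of $6$ at each step rather than only polynomially in $t$ as in Theorem~\ref{thm:bipartite}. A natural way to obtain $c=1/6^t$ is induction on $t$: the base case $t=3$ is immediate from $H_3=C_6$ and the classical estimate $\ex(n,C_6)=O(n^{4/3})$, and in the inductive step the hypothesis for $t-1$ is used to bound the ``extensions through a fixed $K_3$'' count by relating it to an $H_{t-1}$-like structure inside $G_A'$, while the distinctness of the $\binom{t}{2}$ subdividing vertices of the final $H_t$ is ensured greedily using the codegree bound $n^{2\varepsilon}$.
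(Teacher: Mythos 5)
The central step in your ``few $C_4$'s'' branch --- extracting $\Omega(n^{t(1-c-\varepsilon)})$ labelled copies of $K_t$ from $G_A'$ via Kruskal--Katona --- does not work. Kruskal--Katona bounds clique counts from \emph{above}, not below, and more fundamentally a graph on $\Theta(n)$ vertices with $\Theta(n^{2-2c-2\varepsilon})$ edges has edge density $o(1)$ and may well be triangle-free (e.g.\ bipartite), in which case $G_A'$ contains no $K_t$ at all for $t\geq 3$. The analogous step in Theorem~\ref{thm:bipartite} works only because the target $K_{s,t}$ is bipartite: its K\H{o}v\'ari--S\'os--Tur\'an threshold $O(n^{2-1/s})$ lies below the edge count of $G_A'$, so one positive copy plus supersaturation gives many. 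No such density-threshold argument exists for cliques, whose Tur\'an density is a positive constant. This leaves the entire ``count $K_t$'s and discard degenerate extensions'' scheme without a source of $K_t$'s, independently of the difficulties with bounding $M$ that you correctly identify.

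The paper sidesteps this by a genuinely different mechanism: rather than locating a $K_t$ in the codegree graph in a single shot, Lemma~\ref{lem:iterate} builds it one vertex at a time. It proves by induction on $j$ that there are many ``$(\xi_1,\dots,\xi_j;\delta)$-good'' $j$-tuples in $A$ --- tuples inducing a $K_j$ in the neighbourhood graph with small controlled codegrees $d(a_i,a_{i'})$ and with at least $|A|^{1-\delta}$ common extensions. Crucially, each inductive step re-invokes the $H_t$-freeness of $G$: Lemma~\ref{lem:dichotomy} (built on the dependent random choice Lemma~\ref{lem:manyC4asym_2}) shows that the set of extending vertices is $L$-bounded, i.e.\ supports few $C_4$'s, and combining this with the $(\rho,d)$-denseness of the neighbourhood graph (Lemma~\ref{lem:local_dense}) and the truncation Lemma~\ref{lem:large_nbd} yields the next layer of good tuples. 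The recurrence $\delta_{j+1}>4\delta_j+2c$ is exactly what produces the $1/6^t$ exponent. Your suggested induction on $t$ is in the right spirit --- the paper also applies its key lemma iteratively --- but as stated it does not repair the missing supply of cliques, and the inductive hypothesis for $H_{t-1}$ concerns subgraphs of the host graph $G$, not the auxiliary codegree graph $G_A'$, so it cannot be used to bound ``completions of a fixed $K_3$ inside $G_A'$'' directly.
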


We first consider the case $t=3$. This already contains all of the essential ideas needed to generalise to higher $t$, but avoids some cumbersome notation. We should note that our estimate is far from the best known (and in this case optimal) result $\ex(n,C_6)=O(n^{4/3})$ given by Bondy and Simonovits~\cite{BS74}. With that said, we have made no great attempt to optimise, because the important point for us is that our method generalises to prove results not accessible to the Bondy--Simonovits method and its generalisations~\cite{FS83}.

\begin{theorem}\label{thm:K_3}
There exists a constant $C$ such that $\ext(n, C_6) \leq Cn^{1.45}$.
\end{theorem}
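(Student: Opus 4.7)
The plan is to use the dichotomy suggested in the introduction: split into the case where $G$ has many $C_4$'s (where Lemma~\ref{lem:manyC4} applies directly) and the case where it has few (where the codegree distribution is spread out enough to count $C_6$-homomorphisms directly against a small degenerate contribution). Since $C_6$ is precisely the subdivision of $K_3$ and has maximum degree $2$ on each side of its bipartition, $C_6$ is a valid input to Lemma~\ref{lem:manyC4}.

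Fix $c = 1/20$ and a small $\varepsilon > 0$, say $\varepsilon = 1/100$. Given $G$ with $n$ vertices and at least $C n^{3/2 - c}$ edges, I would first invoke Lemma~\ref{lem:regular} with $\alpha = 1/2 - c$ to pass to an $m$-vertex $K$-almost-regular balanced bipartite subgraph $G'$ on $A \cup B$ with $e(G') \geq \tfrac{C}{10} m^{3/2 - c}$; the maximum degree is $\Delta = O(m^{1/2 - c})$. If $|\Hom(C_4, G')| \geq m^{2 - 2c + \varepsilon}$, then Lemma~\ref{lem:manyC4} immediately produces a copy of $C_6$ in $G'$.

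Otherwise $\sum_{u, v \in A} d(u, v)^2 = \tfrac{1}{2} |\Hom(C_4, G')| \leq \tfrac{1}{2} m^{2 - 2c + \varepsilon}$. The spectral lower bound $|\Hom(C_6, G')| = \mathrm{tr}(A_{G'}^6) \geq \lambda_1(G')^6 \geq (2 e(G')/m)^6 = \Omega(m^{3 - 6c})$, which is just Sidorenko's inequality for $C_6$, gives many $C_6$-homomorphisms. It then suffices to show the degenerate (non-injective) ones contribute $o(m^{3 - 6c})$. Parametrise a $C_6$-homomorphism by $(a_1, a_2, a_3, b_1, b_2, b_3)$ with $b_i \in N(a_i) \cap N(a_{i+1})$ (indices mod $3$); non-injective ones split into $a_i = a_j$ and $b_i = b_j$ cases. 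For the representative case $a_1 = a_2$, the count is at most $\sum_{a, a_3} \deg(a) \cdot d(a, a_3)^2 \leq \Delta \sum_{u, v} d(u, v)^2 = O(m^{5/2 - 3c + \varepsilon})$. For $b_1 = b_2$, the count is $\sum_{a_1, a_2, a_3} d(a_1, a_2, a_3) \cdot d(a_3, a_1)$; summing over $a_2$ first via $\sum_{a_2} d(a_1, a_2, a_3) = \sum_{b \in N(a_1) \cap N(a_3)} \deg(b) \leq \Delta \cdot d(a_1, a_3)$ again yields $O(m^{5/2 - 3c + \varepsilon})$. Other pair-coincidences are analogous by symmetry. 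Since $5/2 - 3c + \varepsilon < 3 - 6c$ whenever $c < 1/6 - \varepsilon/3$, and $c = 1/20$ satisfies this comfortably, the degenerate count is dominated, so an injective $C_6$-homomorphism exists and $C_6 \subseteq G$.

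The main subtlety will be the degenerate count in the $b_1 = b_2$ case. A naive bound $d(a_1, a_2, a_3) \leq d(a_1, a_2)$ followed by convexity would yield only $O(m^{3 - 4c})$, which is too weak for our range of $c$. The tight bound relies on the identity $\sum_{a_2} d(a_1, a_2, a_3) = \sum_{b \in N(a_1) \cap N(a_3)} \deg(b)$, converting the triple codegree into a pair codegree weighted by the maximum degree; this is the one point where both hypotheses --- the few-$C_4$ bound on $\sum d(u,v)^2$ and the almost-regularity bound on $\Delta$ --- are simultaneously needed.
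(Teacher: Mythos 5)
Your proof is correct but takes a genuinely different and, for this case, more elementary route than the paper's. You exploit that $H_3=C_6$ is a cycle: the spectral bound $|\Hom(C_6,G')|=\mathrm{tr}(A_{G'}^6)\geq\lambda_1^6\geq(2e(G')/m)^6=\Omega(m^{3-6c})$ gives the total homomorphism count for free, after which the degenerate contribution is controlled directly from the case hypothesis on $\sum d(u,v)^2$ together with almost-regularity. Your estimate in the $b_1=b_2$ case --- summing the triple codegree over $a_2$ to get $\sum_{b\in N(a_1)\cap N(a_3)}\deg(b)\leq\Delta\,d(a_1,a_3)$ rather than using the wasteful $d(a_1,a_2,a_3)\leq d(a_1,a_2)$ --- is exactly right and is where both hypotheses enter simultaneously. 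The paper instead assembles, via Lemmas~\ref{lem:local_dense}, \ref{lem:large_nbd} and a boundedness argument applied to each set $A_u$, a large family of triangles $(u,v,w)$ in the neighbourhood graph on $A$ whose three codegrees are all positive and bounded by small powers of $n$, and only then counts degenerate extensions. That heavier structure is not gratuitous: it is what feeds the induction of Lemma~\ref{lem:iterate} and yields $H_t$ for all $t$, whereas your trace bound is specific to cycles and would not give, say, $H_4$. One technical wrinkle to tidy up: Lemma~\ref{lem:regular} only guarantees $e(G')\geq\tfrac{C}{10}m^{3/2-c}$, so neither the claim $\Delta=O(m^{1/2-c})$ nor the exact-density hypothesis of Lemma~\ref{lem:manyC4} is automatic. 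The clean fix is to split on whether $|\Hom(C_4,G')|\geq M|\Hom(K_{1,2},G')|$ and apply Lemma~\ref{lem:manyC4asym_2} directly; in the other case $\sum d(u,v)^2=O(\bar d^{\,2} m\log m)$ with $\bar d=2e(G')/m$, and any surplus density then strengthens both sides of your final inequality.
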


\begin{proof}
Let $G$ be an $n$-vertex graph with $Cn^{3/2-c}$ edges, where $C$ will be chosen sufficiently large and $c=1/20$. We may assume that $n \geq n_0$ for $n_0$ sufficiently large by subsuming any loss into the constant $C$.
By Lemma~\ref{lem:regular}, we may also assume that $G$ is a $K$-almost-regular balanced bipartite graph on $A\cup B$ with $\frac{1}{2} n \leq |A| \leq \frac{2}{3}n$ and $e(G) = p|A||B|$, where $p = Cn^{-1/2-c}$ and $C$ and $K$ are absolute constants. Lemma~\ref{lem:local_dense} implies that the weighted neighbourhood graph $W$ on $A$ is $(\rho,d)$-dense, where $d=\alpha n^{-2c}$ for some constant $\alpha$ and $\rho \leq n^{-1/4}$ for $n$ sufficiently large.

If $\sum_{uv\in\binom{A}{2}} d(u,v)^2\geq n^{2-2c+\varepsilon/2}$, then Lemma~\ref{lem:manyC4} implies that $G$ contains a copy of $C_6$ for $n$ sufficiently large. Thus, we may assume that $\sum_{uv\in\binom{A}{2}} d(u,v)^2< n^{2-2c+\varepsilon/2}$. If, for $\varepsilon>0$ to be chosen later, we now define
 \begin{align*}
 F_\varepsilon:=\left\{uv\in \binom{A}{2}:d(u,v)\geq n^{\varepsilon}\right\},
 \end{align*}
then Lemma~\ref{lem:heavy} implies that $\sum_{uv\in F_\varepsilon} d(u,v) \leq \beta d n^{2-\varepsilon/2}$ for a positive constant $\beta$.

Let $W_\varepsilon$ be the weighted graph on $A$ obtained from $W$ by deleting all weighted edges $uv\in F_\varepsilon$. Then $\sum_{uv\in \binom{A}{2}}W_\varepsilon(u,v)\geq d |A|^2/4 - \beta dn^{2 - \varepsilon/2} \geq d|A|^2/8$, where we used that $\sum_{uv\in \binom{A}{2}}W(u,v)\geq d \binom{|A|}{2} \geq d|A|^2/4$.
Therefore, by Lemma~\ref{lem:large_nbd}, there is a set $U$ with $|U| \geq dn^{1-\varepsilon}/16$ such that if $A_u =\{v\in A: W_{\varepsilon}(u,v)>0\}$, then $|A_u|\geq dn^{1-\varepsilon}/16$ for all $u \in U$.

Our intention is to apply Lemma~\ref{lem:manyC4asym_2} with 
$A_\text{\ref{lem:manyC4asym_2}}=A_u$, where the set $B$ remains the same.
By Lemma~\ref{lem:ordered_pairs},
\begin{align*}
\sum_{vw\in\binom{A_u}{2}}d(v,w)
\geq \frac{1}{4}|\Hom^*(K_{2,1},G[A_u,B])|,
\end{align*}
provided $n$ is sufficiently large. Now suppose that $|\Hom^*(K_{2,1},G[A_u,B])|\geq |\Hom^*(K_{1,2},G[A_u,B])|$.
If $\delta>0$ satisfies 
\begin{align*}
	|\Hom(C_4,G[A_u,B])| \geq  2n^{\delta}|\Hom^*(K_{2,1},G[A_u,B])|\geq n^{\delta}|\Hom(K_{1,2},G[A_u,B])|,
\end{align*}
then there exists $C_6$ in $G[A_u,B]$ by Lemma~\ref{lem:manyC4asym_2}.
Thus, for any positive $\delta$, it follows that
\[
\sum_{vw\in\binom{A_u}{2}}d(v,w)^2\leq n^{\delta}|\Hom^*(K_{2,1},G[A_u,B])|\leq 4n^{\delta}\sum_{vw\in\binom{A_u}{2}}d(v,w).
\] 
Let $F_{\delta,u}$ be the set $F_{\delta,u}:=\left\{vw\in \binom{A_u}{2}:d(v,w)\geq n^{2\delta}\right\}$.
Then Lemma~\ref{lem:heavy} implies that
\begin{align}\label{eq:small_F}
	\sum_{vw\in F_{\delta,u}}d(v,w)\leq 4n^{-\delta}\sum_{vw\in\binom{A_u}{2}}d(v,w).
\end{align}

Suppose now that $|\Hom^*(K_{2,1},G[A_u,B])< |\Hom^*(K_{1,2},G[A_u,B])|$.
Again, if there is a positive $\delta$ such that
\[
|\Hom(C_4,G[A_u,B])|\geq  2n^{\delta}|\Hom^*(K_{1,2},G[A_u,B])|\geq n^{\delta}|\Hom(K_{1,2},G[A_u,B])|,
\]
then Lemma~\ref{lem:manyC4asym_2} gives a copy of $C_6$ in $G[A_u,B]$.
Hence, since $d(v) \leq Kpn$ for all $v \in A_u$,
\begin{align}\label{eq:C4<K12}
\sum_{vw\in\binom{A_u}{2}}d(u,v)^2\leq n^{\delta}|\Hom^*(K_{1,2},G[A_u,B])|\leq K^2|A_u|n^{2+\delta}p^2.
\end{align}
Combining the $(\rho, d)$-denseness of the neighbourhood graph $W$ with~\eqref{eq:C4<K12}, we obtain
\[
\sum_{vw\in\binom{A_u}{2}}d(v,w)^2\leq \frac{K^2|A_u|n^{2+\delta}p^2}{d\binom{|A_u|}{2}}
\sum_{vw\in\binom{A_u}{2}}d(v,w)=O(n^{2c+\varepsilon+\delta})\sum_{vw\in\binom{A_u}{2}}d(v,w).
\]
Let $\xi = c + \varepsilon + \delta$. Then, again by Lemma~\ref{lem:heavy}, 
\begin{align}\label{eq:small_F2}
	\sum_{vw\in F_{\xi,u}}d(v,w)\leq O(n^{-\varepsilon - \delta})\sum_{vw\in\binom{A_u}{2}}d(v,w).
\end{align}
Therefore, in both cases \eqref{eq:small_F} and \eqref{eq:small_F2}, by the $(\rho, d)$-denseness of the neighbourhood graph $W$,
\begin{align*}
\sum_{vw\notin F_{\xi,u}}d(v,w) = \sum_{vw \in \binom{A_u}{2}}d(v,w) - \sum_{vw\in F_{\xi,u}}d(v,w) 
\geq d|A_u|^2/8
\end{align*} 
for $n$ sufficiently large.

Let us count the number of labelled triangles $(u,v,w)$ in the neighbourhood graph 
such that $u\in U$, $uv, uw \notin F_{\varepsilon}$, and $vw\notin F_{\xi,u}$.
For each $u\in U$, there exist at least $|A_u|^2$ pairs $(v,w)$ such that $uv,uw\notin F_{\varepsilon}$
and, among these, at least $d|A_u|^2n^{-2\xi}/8$ edges are in $\binom{A_u}{2}\setminus F_{\xi,u}$, as the sum of weights $\sum_{vw \in \binom{A_u}{2}\setminus F_{\xi,u}}d(v,w)$ 
is at least $d|A_u|^2/8$ and each edge weight does not exceed $n^{2\xi}$.
Therefore, in total, there are at least
\begin{align*}
\sum_{u\in U}\frac{d|A_u|^2n^{-2\xi}}{8} \geq \frac{d|U|}{8}\left(\frac{dn^{1-\varepsilon}}{16}\right)^2 n^{-2\xi}
\geq  
2^{-15} d^4n^{3-3\varepsilon-2\xi}
=\frac{\alpha^4}{2^{15}}n^{3-10c-5\varepsilon-2\delta}
\end{align*}
labelled triangles, each of which extends to a homomorphism in $\Hom(C_6,G)$ that is injective on $A$. Such a homomorphism is degenerate if and only if the joint neighbours of at least two of $uv$, $vw$, $wu$ meet. If that happens, the homomorphism is fixed by a choice of vertex in $B$ (at most $n$ choices), a choice of three neighbours of this vertex (at most $(Kpn)^3$ choices) and a choice for the common neighbour of the third pair (at most $3 n^{2\xi}$ choices). Overall, this gives at most $3K^3 p^3 n^{4 + 2 \xi} = O(n^{5/2 - c + 2\varepsilon + 2\delta})$ degenerate homomorphisms. Therefore, if $5/2 - c + 2\varepsilon + 2\delta < 3-10c-5\varepsilon-2\delta$,
there exists a non-degenerate copy of $C_6$. Recalling that $c=1/20$ and choosing $\varepsilon = \delta =1/300$ suffices for this purpose.
\end{proof}

In order to generalise this result to $K_t$, we need some further notation. We say that a subset $A'$ of $A$ is \emph{$L$-bounded} if
\[
|\Hom(C_4,G[A',B])|\leq L|\Hom^*(K_{2,1},G[A',B])|.
\]
The following lemma distills out one of the key steps in the proof above.

\begin{lemma}\label{lem:dichotomy}
For every $\varepsilon>0$, there exists $n_0$ such that if $n \geq n_0$, $G$ is an $n$-vertex $K$-almost-regular balanced bipartite graph on $A\cup B$ which is $H_{t}$-free and $A'$ is a subset of $A$ with density $\gamma:=|A'|/|A|$,
then $A'$ is $n^{\varepsilon}/\gamma$-bounded. 
\end{lemma}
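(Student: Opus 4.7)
The plan is to argue by contradiction: suppose the conclusion fails, so that
\[
|\Hom(C_4,G[A',B])|>\frac{n^\varepsilon}{\gamma}|\Hom^*(K_{2,1},G[A',B])|,
\]
and then derive a copy of $H_t$ in $G[A',B]$ via Lemma~\ref{lem:manyC4asym_2}, contradicting the assumed $H_t$-freeness of $G$. Morally, this is the same dichotomy that appears in the proof of Theorem~\ref{thm:K_3}, now isolated as a stand-alone lemma: the factor $1/\gamma$ in the definition of $L$-boundedness is exactly what is needed to account for how much $|\Hom^*(K_{2,1})|$ can shrink when passing from $G$ to $G[A',B]$.

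The key preliminary step is to compare $|\Hom^*(K_{1,2},G[A',B])|$ and $|\Hom^*(K_{2,1},G[A',B])|$. Writing $\bar d$ for the average degree of $G$, the $K$-almost-regularity forces every degree to lie in $[\bar d/K,K\bar d]$, so on the one hand
\[
|\Hom^*(K_{1,2},G[A',B])|=\sum_{a\in A'}\deg(a)^2\leq |A'|(K\bar d)^2,
\]
while on the other hand Cauchy--Schwarz applied to $\sum_{b\in B}\deg_{A'}(b)^2$, together with the bound $e(G[A',B])\geq |A'|\bar d/K$, yields
\[
|\Hom^*(K_{2,1},G[A',B])|\geq \frac{(|A'|\bar d/K)^2}{|B|}.
\]
Taking the ratio and invoking the balancedness condition $|A|\leq 2|B|$ (so that $|B|/|A'|\leq 2/\gamma$), we find that $|\Hom^*(K_{1,2},G[A',B])|\leq (2K^4/\gamma)\,|\Hom^*(K_{2,1},G[A',B])|$, and consequently
\[
|\Hom(K_{1,2},G[A',B])|=|\Hom^*(K_{1,2},G[A',B])|+|\Hom^*(K_{2,1},G[A',B])|\leq \frac{O(1)}{\gamma}|\Hom^*(K_{2,1},G[A',B])|.
\]

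Combining the last display with the contradiction hypothesis gives
\[
|\Hom(C_4,G[A',B])|\geq \Omega(n^\varepsilon)\,|\Hom(K_{1,2},G[A',B])|,
\]
which exceeds the Lemma~\ref{lem:manyC4asym_2} threshold $C\log|B|\cdot|\Hom(K_{1,2},G[A',B])|$ for $n$ large enough, where $C$ is the constant from that lemma applied with $H=H_t$. Since $H_t$ is bipartite with the subdividing vertices forming one part of maximum degree $2$, Lemma~\ref{lem:manyC4asym_2} applies to $G[A',B]$ regardless of which of $A'$, $B$ is larger, and delivers a copy of $H_t$ inside $G[A',B]\subseteq G$, the desired contradiction. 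No serious obstacle is expected; the only care needed is in tracking the powers of $K$ from almost-regularity and in choosing $n_0$ large enough that the logarithmic factor $C\log|B|$ is absorbed into $n^\varepsilon$.
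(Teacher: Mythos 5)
Your proof is correct and lands essentially where the paper's does, with a small organizational simplification. The paper splits into two cases according to whether $|\Hom^*(K_{1,2},G')|$ or $|\Hom^*(K_{2,1},G')|$ is larger, applying Lemma~\ref{lem:manyC4asym_2} directly in the easy case and doing the degree/Cauchy--Schwarz estimate only in the hard one. You instead skip the case split and always prove the worst-case bound $|\Hom^*(K_{1,2},G')| = O(K^4/\gamma)\,|\Hom^*(K_{2,1},G')|$ via $K$-almost-regularity (for the upper bound $\sum_{a\in A'}\deg(a)^2 \lesssim |A'|(K\bar d)^2$), convexity/Cauchy--Schwarz on $\sum_{b\in B}\deg_{G'}(b)^2$, and balancedness (for $|B|/|A'|\leq 2/\gamma$). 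These are exactly the estimates the paper uses in its second case, so the substance of the argument is identical; the factor $1/\gamma$ enters for the same reason in both. Your phrasing as a contradiction is just the contrapositive of how the paper states it.

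One small point worth being explicit about, which you gesture at but don't fully resolve: Lemma~\ref{lem:manyC4asym_2} is stated with the convention $|B|\geq|A|$, and in your application you have $|A'|\leq|A|\leq 2|B|$ but not necessarily $|A'|\leq|B|$. If $|A'|>|B|$ one should relabel the two sides before invoking the lemma; this swaps $\Hom^*(K_{1,2})$ and $\Hom^*(K_{2,1})$ but leaves $\Hom(K_{1,2})$ unchanged and changes $M$ only up to an additive $O(1)$ in the logarithm, so everything goes through. The paper is equally terse about this, so it is not a gap in your argument relative to theirs, but it is the one place where "regardless of which of $A'$, $B$ is larger" deserves a line of justification.
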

\begin{proof}
Let $G':=G[A',B]$ and let $p:=e(G)/|A||B|$.
If $|\Hom^*(K_{1,2},G')|\leq |\Hom^*(K_{2,1},G')|$, then Lemma~\ref{lem:manyC4asym_2}
immediately implies that $A'$ is $n^{\varepsilon/2}$-bounded.
Suppose now that $|\Hom^*(K_{1,2},G')|> |\Hom^*(K_{2,1},G')|$.
Then, again by Lemma~\ref{lem:manyC4asym_2},
\begin{align}\label{eq:dichotomy}
|\Hom(C_4,G')|\leq n^{\varepsilon/2}|\Hom^*(K_{1,2},G')|\leq K^2p^2n^{2+\varepsilon/2}|A'|.
\end{align}
On the other hand,
\[
|\Hom^*(K_{2,1},G')|=\sum_{b\in B}\deg_{G'}(b)^2 \geq \frac{1}{|B|}e(A',B)^2\geq \frac{|A'|^2|B|p^2}{K^2},
\]
where the first inequality is by convexity and the second follows from $K$-almost regularity of $G$.
Combining this with \eqref{eq:dichotomy} implies that
\[
|\Hom(C_4,G')|\leq K^2p^2n^{2+\varepsilon/2}|A'|\leq n^{\varepsilon}\gamma^{-1}|\Hom^*(K_{2,1},G')|
\]
for $n$ sufficiently large, as required.
\end{proof}

For $\xi>0$ and $u\in A$, define $\eta_\xi(u):=\{a\in A: 1\leq d(u,a)<n^{\xi}\}$. Let $\xi_1<\xi_2<\cdots<\xi_{j}$ be an increasing sequence of $j$ positive numbers and $\delta>0$. A $j$-tuple $J=(a_1,\cdots,a_j)\in A^j$ is said to be $(\xi_1,\xi_2,\cdots,\xi_{j};\delta)$-\emph{good} if, for every $1\leq i < i'\leq j$, $a_{i'} \in \eta_{\xi_i}(a_i)$ and there exist at least $|A|^{1-\delta}$ vertices $a$ such that $a\in \eta_{\xi_i}(a_i)$ for all $i=1,2,\cdots,j$. We say that each $a\in \cap_{i=1}^j\eta_{\xi_i}(a_i)$ \emph{extends} the $(\xi_1,\xi_2,\cdots,\xi_{j};\delta)$-good $j$-tuple $J$. That is, $\{a_1,\cdots,a_j\}$ induce a copy of $K_j$ in the neighbourhood graph on $A$, where the codegrees $d(a_i,a_{i'})$ are not too large and there exist many choices for proceeding further. The core of the proof of Theorem~\ref{thm:K_t} is now contained in the following lemma, proving the existence of many good $t$-tuples in sufficiently dense $H_{t+1}$-free graphs, where again $H_{t+1}$ is the subdivision of $K_{t+1}$.

\begin{lemma}\label{lem:iterate}
For every integer $t \geq 1$ and any positive constants $K$ and $\delta$ with $\delta<1/4$, there exist positive constants $C$, $c$ and $\xi_1,\cdots,\xi_t$ such that if $G$ is an $n$-vertex $K$-almost-regular balanced bipartite graph on $A\cup B$ with $e(G)=Cn^{3/2-c}$ edges which is $H_{t+1}$-free, then the number of $(\xi_1,\xi_2,\cdots,\xi_{t};\delta)$-good $t$-tuples is at least $ |A|^{t-\delta}$.
\end{lemma}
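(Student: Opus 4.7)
I prove the lemma by induction on $t$, greedily constructing good tuples one vertex at a time. The invariant to maintain is that every good $j$-tuple $J=(a_1,\ldots,a_j)$ constructed in the process has extension set $A_J:=\bigcap_{i\le j}\eta_{\xi_i}(a_i)$ of size at least $d^{j}|A|/(M_1\cdots M_j)$, where $d=\Theta(n^{-2c})$ is the base density arising from Lemma~\ref{lem:local_dense} and $M_i:=n^{\xi_i}$. As long as this lower bound exceeds $|A|^{1-\delta}$, the invariant automatically certifies $J$ as good.

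\textbf{Extension step.} Given a good $j$-tuple $J$ with extension set $A_J$, apply Lemma~\ref{lem:dichotomy} with $A':=A_J$ and the $H_{t+1}$-freeness of $G$ to obtain
\[
\sum_{v,w\in A_J}d(v,w)^{2}\;\le\;\frac{n^{\varepsilon}}{\gamma_J}\sum_{v,w\in A_J}d(v,w),
\]
where $\gamma_J=|A_J|/|A|$. By Lemma~\ref{lem:heavy}, pairs with $d(v,w)\ge M_{j+1}:=n^{\xi_{j+1}}$ contribute at most an $n^{\varepsilon}/(\gamma_J M_{j+1})$ fraction of the codegree sum, so choosing $\xi_{j+1}\ge\varepsilon-\log\gamma_J/\log n+\varepsilon'$ keeps this fraction below $n^{-\varepsilon'}$. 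Since the invariant forces $|A_J|\ge|A|^{1-\delta}\ge n^{1/2+c}$, Lemma~\ref{lem:local_dense} lower-bounds the surviving ``light'' codegree weight by $d|A_J|^2/2$. Finally, Lemma~\ref{lem:large_nbd} produces $U_J\subseteq A_J$ of size at least $d|A_J|/M_{j+1}$ such that for each $a_{j+1}\in U_J$ the enlarged extension set $A_{J\cup\{a_{j+1}\}}$ has size at least $d|A_J|/M_{j+1}\ge d^{j+1}|A|/(M_1\cdots M_{j+1})$, preserving the invariant.

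\textbf{Aggregate count and parameter choice.} Summing the extensions along the greedy tree yields
\[
N_t\;\ge\;\prod_{j=1}^{t}\frac{d^{j}|A|}{M_{1}\cdots M_{j}}\;=\;|A|^{t}\cdot n^{-ct(t+1)-\sum_{i=1}^{t}(t-i+1)\xi_i}.
\]
Setting $\zeta_j:=2jc+\sum_{i\le j}\xi_i$, the lower bound on $\xi_{j+1}$ from the dichotomy gives the recursion $\zeta_{j+1}\ge 2\zeta_j+O(c+\varepsilon)$, so $\zeta_j=O(2^j(c+\varepsilon))$. Two constraints must hold: (a) $\zeta_j\le\delta$ for all $j\le t$, so that the invariant indeed implies goodness; and (b) the aggregate exponent loss $ct(t+1)+\sum_i(t-i+1)\xi_i=O(t\cdot 2^t(c+\varepsilon))$ is at most $\delta$, delivering $N_t\ge|A|^{t-\delta}$. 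Both are met by choosing $c$ and $\varepsilon$ of order $\delta/(t\cdot 2^{t+1})$, after which $C$ is selected large enough to validate every preceding lemma.

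\textbf{Main obstacle.} The crux is the doubling recursion on the thresholds $\xi_j$: each extension step shrinks the relative density $\gamma_J$, forcing the next dichotomy threshold to absorb all previous accumulated loss, so the $\xi_j$ necessarily grow geometrically. Controlling this blow-up so that the total loss stays below $\delta$ is what pins $c$ to a value exponentially small in $t$, and is ultimately responsible for the $1/6^t$ exponent appearing in Theorem~\ref{thm:K_t}.
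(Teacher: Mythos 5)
Your proof follows essentially the same route as the paper's: induction on tuple length, with each extension step using Lemma~\ref{lem:dichotomy} to get boundedness, Lemma~\ref{lem:heavy} to discard heavy pairs, and Lemmas~\ref{lem:local_dense} and~\ref{lem:large_nbd} to extract many light extensions. The only real difference is bookkeeping: you track the density $\gamma_J$ of the extension set and the accumulated exponent $\zeta_j$ directly, whereas the paper couples the count loss and the extension-size loss into a single parameter $\delta_j$ satisfying $\delta_{j+1}>4\delta_j+2c$ (hence the factor $6$); your decoupled version gives the marginally tighter $\zeta_{j+1}\approx 2\zeta_j+O(c+\varepsilon)$, so your closing remark attributing the $1/6^t$ specifically to a doubling recursion is a small inconsistency, and you also gloss over the step (handled in the paper via Lemma~\ref{lem:ordered_pairs}) needed to compare $\Hom^*(K_{2,1})$ sums with sums over unordered pairs, but neither affects correctness.
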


\begin{proof}
As usual, we may assume that $n \geq n_0$ for $n_0$ sufficiently large by subsuming any loss into the constant $C$. We will prove the result by induction on $j$, showing that for every integer $1 \leq j \leq t$ and any positive constants $K$ and $\delta_j$ with $\delta_j < 1/4$, there are $C$, $c$ and $\xi_1,\cdots,\xi_j$ such that if $G$ is an $n$-vertex $K$-almost-regular balanced bipartite graph on $A\cup B$ with $e(G)= p|A||B|$ edges for $p = Cn^{-1/2-c}$ which is $H_{t+1}$-free, then the number of $(\xi_1,\xi_2,\cdots,\xi_{j};\delta_j)$-good $j$-tuples is at least $|A|^{j-\delta_j}$.

Before starting the induction, it will be useful to note, by applying Lemma~\ref{lem:local_dense} with $\delta = pn/4K$, that if $A' \subseteq A$ has order at least $n^{3/4}\geq 8K/p\geq 2n/\delta$, then 
$$\sum_{uv \in \binom{A'}{2}} d(u,v) \geq \frac{\delta^2}{2n} \binom{|A'|}{2} \geq \frac{C^2}{128K^2} n^{-2c} |A'|^2.$$
For brevity in what follows, we will write $d_0 = C^2 n^{-2c}/128K^2$.

Suppose first that $j=1$. 
By Lemma~\ref{lem:manyC4} with $\varepsilon=\xi_1/2$, 
we have $\sum_{uv\in\binom{A}{2}}d(u,v)^2\leq n^{2-2c+\xi_1/2}$, since otherwise $G$ would contain $H_{t+1}$ for $n$ sufficiently large. We may therefore apply Lemma~\ref{lem:heavy} with $M_{\ref{lem:heavy}} = n^{\xi_1}$ and $S_{\ref{lem:heavy}} = n^{2-2c + \xi_1/2}$ to conclude that
\begin{align*}
\sum_{uv\in F_M} d(u,v) = O(n^{-\xi_1/2}d_0|A|^2).
\end{align*}
Since $\sum_{uv\in\binom{A}{2}}d(u,v) \geq d_0 |A|^2$, we see that $\sum_{uv\notin F_M}d(u,v) \geq d_0 |A|^2/2$  for $n$ sufficiently large, so we may apply Lemma~\ref{lem:large_nbd} to conclude that there is $U\subseteq A$ with $|U|\geq d_0|A|n^{-\xi_1}/2$ such that
\begin{align*}
|\eta_{\xi_1}(u)|=|\{v\in A: 1\leq d(u,v)<n^{\xi_1}\}|\geq d_0|A|n^{-\xi_1}/2.
\end{align*}
Thus, taking $2c+\xi_1<\delta_1$, we have that $|U|\geq |A|^{1-\delta_1}$ and $|\eta_{\xi_1}(u)|\geq |A|^{1-\delta_1}$ for each $u\in U$ and $n$ sufficiently large.

Suppose now that $\xi_1,\cdots,\xi_{j}$ are such that at least $|A|^{j-\delta_j}$ $(\xi_1,\cdots,\xi_{j};\delta_j)$-good $j$-tuples exist, for some $\delta_j$ to be specified later. For brevity, we will say that a $j$-tuple $J$ is \emph{good} if it is $(\xi_1,\cdots,\xi_{j};\delta_j)$-good and let $\J$ be the set of all good $j$-tuples. For each good $J$, denote by $\K_{j+1}(J)$ the set of all $a\in A$ that extend $J$. Then $|\K_{j+1}(J)|\geq |A|^{1-\delta_j}$ by definition.

By applying Lemma~\ref{lem:dichotomy} with $\varepsilon = \delta_j/2$, 
we see that $\K_{j+1}(J)$ is $n^{3\delta_j/2}$-bounded for $n$ sufficiently large.
Define 
$$F_{J,\delta_j} := \left\{uv\in \binom{\K_{j+1}(J)}{2}:d(u,v)\geq n^{2\delta_j}\right\}.$$ 
Since $\sum_{uv\in \binom{K_{j+1}(J)}{2}} d(u,v)^2 \leq |\Hom(C_4,G[\K_{j+1}(J),B])|$, Lemma~\ref{lem:heavy} and the $n^{3\delta_j/2}$-boundedness of $\K_{j+1}(J)$ imply that
\begin{align}\label{eq:badweight}
\sum_{uv\in F_{J,\delta_j}} d(u,v) \leq n^{-\delta_j/2}
|\Hom^*(K_{2,1},G[\K_{j+1}(J),B])|.
\end{align}
Since $\delta_j < 1/4$, we have $|K_{j+1}(J)| \geq n^{3/4} \geq 2n/\delta$, so
Lemma~\ref{lem:ordered_pairs} implies that 
\[\sum_{uv\in \binom{\K_{j+1}(J)}{2}}d(u,v)\geq \frac{1}{4}|\Hom^*(K_{2,1},G[\K_{j+1}(J),B])|.\]
Together with \eqref{eq:badweight}, this yields
\[
\sum_{uv\notin F_{J,\varepsilon_j}} d(u,v) \geq (1-4n^{-\delta_j/2})
\sum_{uv\in \binom{\K_{j+1}(J)}{2}}d(u,v)
\geq \frac{1}{2}\sum_{uv\in \binom{\K_{j+1}(J)}{2}}d(u,v).
\]
Moreover, by Lemma~\ref{lem:local_dense}, 
$\sum_{uv\in \binom{\K_{j+1}(J)}{2}}d(u,v)\geq d_0|\K_{j+1}(J)|^2$ and hence, 
for $n$ sufficiently large, $\sum_{uv \notin F_{J,\varepsilon_j}} d(u,v)\geq d_0|\K_{j+1}(J)|^2/2$.
Therefore, by Lemma~\ref{lem:large_nbd}, there is a subset $U\subseteq |\K_{j+1}(J)|$ with 
$$|U|\geq n^{-2\delta_j} d_0|K_{j+1}(J)|/2 =\Omega(|A|^{1 - 3\delta_j - 2c})$$
such that, for each $u\in U$,
\begin{align*}
|\{v\in\K_{j+1}(J):0<d(u,v)<n^{2\delta_j}\}|\geq n^{-2\delta_j} d_0|\K_{j+1}(J)|/2 = \Omega(|A|^{1 - 3\delta_j - 2c}).
\end{align*}
Taking $\delta_{j+1}>4\delta_j+2c$ and $n$ sufficiently large, we see that $(J,u)$ is $(\xi_1,\cdots,\xi_j,2\delta_j;\delta_{j+1})$-good for each $u\in U$ and the number of $(\xi_1,\cdots,\xi_j,2\delta_j;\delta_{j+1})$-good $(j+1)$-tuples is at least
$$
|A|^{j-\delta_j}|U|= \Omega(|A|^{j+1-4\delta_j-2c}) \geq |A|^{j+1 - \delta_{j+1}}.
$$
Taking $\delta_j = \delta/6^{t-j}$ and $c = \xi_1 = \delta/6^t$, we see that
\[
4 \delta_j + 2c = \frac{4 \delta}{6^{t-j}} + \frac{2 \delta}{6^t} < \frac{\delta}{6^{t-j-1}} = \delta_{j+1},
\]
$2c + \xi_1 < \delta_1$ and $\delta_j < 1/4$ for all $j$, so the necessary conditions hold. 
For future use, we also note that since $\xi_{j} = 2 \delta_{j-1}$, 
we have $3 \xi_{j} = \delta/6^{t-j}$ for all $j \geq 1$.
\end{proof}

We are now ready to complete the proof of Theorem~\ref{thm:K_t}.

\begin{proof}[Proof of Theorem~\ref{thm:K_t}]
Let $G$ be an $n$-vertex graph with $Cn^{3/2-c}$ edges, where $C$ will be chosen sufficiently large and $c = 6^{-t}$. We may assume that $n \geq n_0$ for $n_0$ sufficiently large by subsuming any loss into the constant $C$.
By Lemma~\ref{lem:regular}, we may also assume that $G$ is a $K$-almost-regular balanced bipartite graph on $A\cup B$ with $e(G)= p|A||B|$, where $p = Cn^{-1/2-c}$ and $C$ and $K$ are absolute constants.

Suppose that $G$ is $H_{t}$-free. Then Lemma~\ref{lem:iterate} implies that there are at least $|A|^{t-1-\delta}$ $(t-1)$-tuples that are
$(\xi_1,\xi_2,\cdots,\xi_{t-1};\delta)$-good, where $\xi_j = 6^{j} c/3$ for $1\leq j\leq t-1$ and $\delta = 6^{t-1} c = 1/6$. We say that a $(t-1)$-tuple $T$ is good if $T$ is $(\xi_1,\xi_2,\cdots,\xi_{t-1};\delta)$-good. By definition, a good $(t-1)$-tuple $T=(a_1,a_2,\cdots,a_{t-1})$ together with any $u\in A$ that extends $T$ induces a copy of $K_t$ in the neighbourhood graph which can be extended to a homomorphic copy of $H_t$ in $G$.
Let $\Psi$ be the set of all homomorphisms from $H_t$ to $G$ constructed from a good $T$ and a vertex that extends $T$. There are at least $|A|^{t-2\delta}$ homomorphic copies of $H_t$ in $\Psi$. Any degenerate copy must contain a copy of $K_{1,3}$, which extends to at most $n^{t-3+\kappa}$ homomorphic copies of $K_t$, where 
\[
\kappa=\sum_{j=1}^{t-1}(t-j)\xi_j = \frac{2c}{25}(6^t-5t-1)< 6^{t-1} c.
\]
Therefore, there are at most
\begin{align*}
K^3n^4p^3 n^{t-3+\kappa}=O(n^{t-1/2-3c+\kappa})
\end{align*}
degenerate homomorphisms in $\Psi$.
As $t-2\delta>t-1/2-3c+\kappa$ for our choice of $\xi_1,\xi_2,\cdots,\xi_{t-1}$, $c$ and $\delta$, there exists a non-degenerate copy of $H_{t}$ in $\Psi$.
\end{proof}

\section{Concluding remarks}

Our investigations raise many open problems. The first, most obvious, question is to give a better estimate for $\ext(n, H_t)$, where $H_t$ is the subdivision of $K_t$. We have
\[c_t n^{3/2 - (t-3/2)/(t^2 - t - 1)} \leq \ext(n, H_t) \leq C_t n^{3/2 - 1/6^t},\]
where the upper bound is Theorem~\ref{thm:K_t} and the lower bound follows from a simple application of the probabilistic deletion method (see, for instance,~\cite[Section 2.5]{FS13}). Ideally, we would like to bring the two bounds in line with each other, but a first step might be to improve the upper bound to $\ext(n, H_t) \leq C_t n^{3/2 - \delta_t}$ where $\delta_t^{-1}$ is bounded by a polynomial in $t$.

While interesting in its own right, the lower bound is also interesting because of its connection with the study of pseudorandom graphs. A surprising construction of Alon~\cite{A94} shows that there are highly pseudorandom triangle-free graphs with $n$ vertices and density $\Omega(n^{-1/3})$, surprising because this density is much higher than the $\Omega(n^{-1/2})$ produced by random means. Quite recently, the first author~\cite{C17} found an alternative construction of such graphs by `unsubdividing' a standard construction of $C_6$-free graphs. A similar approach might allow us to construct pseudorandom $K_t$-free graphs of high density if only we had better constructions for $H_t$-free graphs than those given by the probabilistic method. A first aim would be to beat the construction of Alon and Krivelevich~\cite{AK97}, which gives optimally pseudorandom $K_t$-free graphs with $n$ vertices and density $\Omega(n^{-1/(t-2)})$, but it is plausible that such graphs exist all the way up to density $\Omega(n^{-1/(2t-3)})$.

For the more general Conjecture~\ref{conj:main}, it seems that new ideas will be needed. Our proof relies in an essential way on the fact that the neighbourhood graph of a bipartite graph with high minimum degree is $(\rho, d)$-dense for appropriate parameters $\rho$ and $d$ and then that this condition is sufficient for finding copies of any fixed graph $H$. If we move to $r \geq 3$, the neighbourhood graph must be replaced with an $r$-uniform hypergraph and the $(\rho, d)$-denseness condition, or rather its obvious hypergraph analogue, is then completely insufficient for finding the necessary subhypergraphs (see~\cite{KNRS10} for a discussion of this point). However, there are certain special cases which might still be amenable to our methods. To say more, we state a slightly weaker version of our main conjecture.

\begin{conjecture} \label{conj:weak}
For any bipartite graph $H$ between vertex sets $A$ and $B$ such that every vertex in $B$ has maximum degree $r$ and there is no $K_{r,2}$ with the side of order $2$ in $B$, there exist positive constants $C$ and $\delta$ such that 
\[\ext(n,H) \leq C n^{2 - 1/r - \delta}.\]
\end{conjecture}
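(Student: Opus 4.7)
The plan is to extend the dichotomy/dependent-random-choice framework from $r=2$ to general $r$. By Lemma~\ref{lem:regular}, one may assume $G$ is a $K$-almost-regular balanced bipartite graph on $A\cup B$ with edge density $p=Cn^{-1/r-c}$ for some small $c>0$. Writing $d(a_1,\ldots,a_r):=|N(a_1)\cap\cdots\cap N(a_r)|$ for the $r$-fold codegree, the goal is to locate a set $S\subseteq A$ with $|S|=|A(H)|$ such that, for every $b\in B(H)$, the codegree of the image of $N_H(b)$ in $S$ is at least $v(H)$; since $H$ contains no $K_{r,2}$ with the pair in $B(H)$, a greedy embedding of $B(H)$ into $B$ then completes the proof.

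The first major step is an analogue of Lemma~\ref{lem:manyC4asym_2}: if $|\Hom(K_{r,2},G)|\ge M\cdot|\Hom(K_{r,1},G)|$ with $M=C\log n$ and the $K_{r,2}$ is oriented so that its pair lies in $B$, then $G$ already contains $H$. One mimics the DRC proof by averaging to pick an $(r-1)$-tuple $T\subseteq A$ whose co-neighbourhood $B':=N(T)\subseteq B$ witnesses the excess, dyadically partitioning $N(B')$ by degree into $B'$, and choosing a random $r$-th vertex $z$ in a heavy dyadic class. The expected fraction of pairs $b,b'\in N_{B'}(z)$ with codegree smaller than $h=v(H)$ is $O((\log n)/M)$, so for $C$ large a Tur\'an-type argument inside $N_{B'}(z)$ yields an $h$-clique of high-codegree pairs, from which $H$ is embedded greedily.

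The second step mirrors Lemma~\ref{lem:iterate}: one iteratively constructs $t$-tuples $(a_1,\ldots,a_t)\subseteq A^t$ with $t=|A(H)|$ such that every $r$-subset has codegree in $[1,n^{\xi_j})$ for appropriate $\xi_j$, and with a large common co-neighbourhood in $A$. At each iteration one either invokes the key lemma on a restricted bipartite graph $G[A',B]$, or uses hypergraph analogues of Lemmas~\ref{lem:heavy} and~\ref{lem:large_nbd} applied to the $r$-uniform codegree to extend the tuple while keeping new $r$-codegrees small. Once a good $t$-tuple is produced, a counting argument of the type in the proof of Theorem~\ref{thm:K_t} shows that non-degenerate copies of $H$ dominate the degenerate ones.

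The main obstacle, and the reason this remains only a conjecture, is the absence of a hypergraph analogue of the $(\rho,d)$-density of the neighbourhood graph strong enough to force the required codegree structure. For $r=2$, Lemma~\ref{lem:local_dense} supplies this automatically, but for $r\ge 3$ the natural notion of a $(\rho,d)$-dense $r$-uniform hypergraph is insufficient to force copies of $K_h^{(r)}$, as discussed in~\cite{KNRS10}. A workable substitute would have to exploit the fact that the codegree hypergraph arises from a bipartite graph whose $K_{r,2}$-count is controlled, so that on every large $A'\subseteq A$ the induced $K_{r,2}$-to-$K_{r,1}$ ratio remains bounded by $n^{O(\varepsilon)}/\gamma$ with $\gamma=|A'|/|A|$. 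Establishing this hereditary regularity, the true analogue of Lemma~\ref{lem:dichotomy}, appears to require a genuinely new idea.
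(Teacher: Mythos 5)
This is an open conjecture (Conjecture~\ref{conj:weak}), stated in the paper's concluding remarks precisely because it is \emph{not} proved there, so there is no author's argument to compare your attempt against. What you have written is a research plan rather than a proof, and you are candid about that in your final paragraph. Your diagnosis of where the plan breaks down matches the authors' own assessment: they observe that the proof of Theorem~\ref{thm:1subdiv} relies in an essential way on the $(\rho,d)$-density of the codegree graph supplied by Lemma~\ref{lem:local_dense}, and that for $r\ge 3$ the codegree structure becomes an $r$-uniform hypergraph for which the analogous local-density condition is completely insufficient for forcing the required subhypergraphs, citing~\cite{KNRS10}. Your identification of a hereditary $K_{r,2}$-to-$K_{r,1}$ boundedness statement, namely the $r$-uniform analogue of Lemma~\ref{lem:dichotomy}, as the missing ingredient is the right way to locate the gap.

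Two smaller points worth recording. Even your Step~1, the proposed DRC analogue of Lemma~\ref{lem:manyC4asym_2}, is less routine than the sketch suggests: for $r\ge 3$ you need $r$-wise rather than pairwise codegree control among $N_{B'}(z)$, and after bounding the number of bad $r$-tuples you must still extract a $K_h^{(r)}$ from a near-extremal $r$-uniform hypergraph, so the quantitative slack would need to be checked against the $\log n$ losses from the dyadic partition and the averaging over $(r-1)$-tuples. Also, the orientation is flipped relative to the paper's convention: in the proof of Lemma~\ref{lem:manyC4asym_2} the Tur\'an clique $\{b_1,\dots,b_h\}$ is found in $B$ and receives the high-degree side of $H$, with the bounded-degree side embedded through the codegrees, so with the paper's labelling your target set $S$ should live in $B$ rather than $A$. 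Neither point changes the verdict: the conjecture remains open, and your proposal is correctly framed only as a roadmap toward it.
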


Now, given a hypergraph $H$, we define its {\it subdivision} to be the bipartite graph between $V(H)$, the vertices of $H$, and $E(H)$, the edges of $H$, where we join $v \in V(H)$ and $e \in E(H)$ if and only if $v \in e$. Conjecture~\ref{conj:weak} then says that if $H$ is an $r$-uniform hypergraph, its subdivision has extremal number at most $C n^{2 - 1/r - \delta}$ for some positive constants $C$ and $\delta$. Stated in this form, it clearly suffices to prove the conjecture for subdivisions of the complete $r$-uniform hypergraphs $K_t^{(r)}$, but it also allows one to consider other cases, such as when $H$ is a linear hypergraph, which might be more accessible. Indeed, these linear hypergraphs, where any pair of edges intersect in at most one vertex, are controllable using a local density condition~\cite{KNRS10}, so our techniques might conceivably apply. Our techniques certainly do apply for subdivisions of $r$-partite $r$-uniform hypergraphs, where the proof of Theorem~\ref{thm:bipartite} easily extends to prove the following result. If we are not concerned with optimising the result, we may even do without the hypergraph analogue of Lemma~\ref{lem:inj}.

\begin{theorem}\label{thm:rpartite}
For any $r, t \geq 2$, there exist positive constants $C$ and $\delta$ such that if $H_{t,t, \dots, t}$ is the subdivision of the $r$-partite $r$-uniform hypergraph $K_{t,t, \dots, t}$, then $\ex(n,H_{t,t, \dots, t})\leq Cn^{2 - 1/r - \delta}$.
\end{theorem}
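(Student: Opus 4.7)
The plan is to mimic the proof of Theorem~\ref{thm:bipartite}, using an $r$-uniform codegree hypergraph in place of the $2$-uniform codegree graph. Starting with an $n$-vertex $G$ having $Cn^{2-1/r-c}$ edges, I would first apply Lemma~\ref{lem:regular} with $\alpha = 1-1/r-c$ to pass to a $K$-almost-regular balanced bipartite subgraph on $A\cup B$ with $|A|,|B|\sim m$ and density $p = \Theta(m^{-1/r-c})$. I then define the weighted $r$-uniform hypergraph $\HH$ on $A$ by
\[
w(S) := \Bigl|\bigcap_{v\in S} N_G(v)\Bigr|,\qquad S\in\binom{A}{r},
\]
so that by convexity and almost-regularity
\[
W_1 := \sum_S w(S) = \sum_{b\in B}\binom{\deg(b)}{r} = \Theta(m^{r-rc}),
\]
while the ``random baseline'' for $W_2 := \sum_S w(S)^2 = \sum_{b,b'\in B}\binom{|N(b)\cap N(b')|}{r}$ is $\Theta(m^{r-2rc})$.

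The proof then splits on $W_2$. If $W_2 \geq m^{r-2rc+\varepsilon'}$ for a suitable $\varepsilon' > 0$, the number of copies of $K_{2,r}$ with the $r$-side in $A$ is anomalously large, and I would deduce $H_{t,\dots,t}\subseteq G$ via an $r$-uniform hypergraph analogue of Lemma~\ref{lem:manyC4}. The proof of this analogue will run parallel to Lemma~\ref{lem:manyC4asym_2}: average over $a\in A$ to pick $x$ whose neighbourhood $B'=N(x)\subseteq B$ carries many $r$-wise heavy codegrees; dyadically partition $A'=N(B')\subseteq A$ by degree into $B'$ and isolate the level $A_j$ that dominates the weighted codegree sum; apply dependent random choice on $G[A_j,B']$ to pick $z\in A_j$ such that the auxiliary $r$-uniform hypergraph on $N_{B'}(z)$, whose hyperedges are those $r$-subsets with common $A$-codegree at least $|V(H_{t,\dots,t})|$, has edge density well above the Erd\H{o}s threshold for $K_{t,\dots,t}^{(r)}$; extract a copy of $K_{t,\dots,t}^{(r)}$ in $B$ from this auxiliary hypergraph; and embed the subdivision by greedily placing the $t^r$ subdividing vertices into the guaranteed common neighbourhoods in $A$. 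Setting up this hypergraph dependent random choice is the main technical obstacle; the remainder mirrors Lemma~\ref{lem:manyC4asym_2}.

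In the other case $W_2 < m^{r-2rc+\varepsilon'}$, Lemma~\ref{lem:heavy} bounds the weight on heavy hyperedges $F = \{S : w(S) \geq m^{\varepsilon}\}$ by $W_2/m^{\varepsilon} \leq m^{r-2rc+\varepsilon'-\varepsilon}$, which is $o(W_1)$ once $\varepsilon > \varepsilon' - rc$. I then form the simple $r$-uniform hypergraph $\HH'$ with edge set $\{S : 0 < w(S) < m^{\varepsilon}\}$; this has $|E(\HH')| \geq W_1/(2m^{\varepsilon}) = \Omega(m^{r-rc-\varepsilon})$. Choosing $c,\varepsilon$ so that $rc+\varepsilon < 1/t^{r-1}$ puts $|E(\HH')|$ well above the Erd\H{o}s $r$-partite extremal threshold $O(m^{r-1/t^{r-1}})$ for $K_{t,\dots,t}^{(r)}$, and the standard supersaturation argument then yields $\Omega(q^{t^r}m^{rt})$ copies of $K_{t,\dots,t}^{(r)}$ in $\HH'$, with $q = |E(\HH')|/\binom{m}{r}$.

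Each such copy of $K_{t,\dots,t}^{(r)}$ in $\HH'$ extends to at least one homomorphism of $H_{t,\dots,t}$ into $G$ that is injective on the $rt$ old vertices by choosing a subdividing vertex in $\bigcap_{v\in S} N(v)$ for each hyperedge $S$. A degenerate extension, in which two distinct subdividing vertices collapse at the same $b\in B$, forces a $K_{1,r+1}$ centred at $b$, and the total number of such $K_{1,r+1}$'s is $\sum_{b\in B}\binom{\deg(b)}{r+1} = O(p^{r+1}m^{r+2}) = O(m^{r+1-1/r-(r+1)c})$. Each such $K_{1,r+1}$ extends in at most $m^{rt-r-1}\cdot (m^{\varepsilon})^{t^r-2}$ further ways to a degenerate copy whose remaining subdividing vertices come from $\HH'$-hyperedges. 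Taking $c,\varepsilon = \Theta(1/(rt^r))$, the non-degenerate lower bound $\Omega(m^{rt-(rc+\varepsilon)t^r})$ dominates this degenerate count, yielding the desired copy of $H_{t,\dots,t}$.
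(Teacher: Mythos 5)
Your overall plan---regularise and bipartise via Lemma~\ref{lem:regular}, set up the $r$-uniform codegree hypergraph, split on whether its second moment $W_2$ is large, handle the large case by a hypergraph dependent random choice argument, and handle the small case by truncating heavy hyperedges via Lemma~\ref{lem:heavy} and applying the Erd\H{o}s $r$-partite extremal bound plus supersaturation together with a $K_{1,r+1}$-count of degenerate extensions---is exactly the route the paper has in mind when it says the proof of Theorem~\ref{thm:bipartite} ``easily extends.'' Your Case~2 is correct as stated, including dropping Lemma~\ref{lem:inj} in exchange for a worse $\delta$, the degenerate count via $K_{1,r+1}$'s centred in $B$, and the final choice $c,\varepsilon=\Theta(1/(rt^r))$.

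Two points in Case~1 need repair. First, the quantity you threshold, $W_2=\sum_{S\in\binom{A}{r}}w(S)^2$, decomposes as $\sum_{b\in B}\sum_{S\in\binom{N(b)}{r}}w(S)$---a sum over $b\in B$, not over $a\in A$. To extract a vertex $x$ whose neighbourhood carries heavy $r$-wise codegrees from ``$W_2$ large'' you must average over $b\in B$, yielding $N(x)\subseteq A$; the quantity that decomposes over $a\in A$ and would give $B'=N(x)\subseteq B$ is instead $W_2'=\sum_{S\in\binom{B}{r}}\bigl|\bigcap_{v\in S}N(v)\bigr|^2$, which is a priori unrelated to $W_2$. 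As written, your case split guards $W_2$ but your DRC sketch consumes $W_2'$-type information; you need to carry out both cases on the same side (swapping $A$ and $B$ in one of them is enough). Second, the ``random baseline'' $W_2=\Theta(m^{r-2rc})$ is wrong: the diagonal $b=b'$ in $W_2=\sum_{b,b'}\binom{|N(b)\cap N(b')|}{r}$ contributes $\sum_b\binom{\deg(b)}{r}=W_1=\Theta(m^{r-rc})$, so $W_2\geq\Omega(m^{r-rc})$ always. Your inequality $\varepsilon>\varepsilon'-rc$ is still the right condition for Lemma~\ref{lem:heavy} to make the heavy hyperedges negligible, but you must also take $\varepsilon'>rc$ or Case~2 is vacuous. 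Finally, the hypergraph DRC itself is where the real work remains: the analogues of~\eqref{eq:K12vsC4},~\eqref{eq:dyadic_lower_asym} and~\eqref{eq:neighbour_large} must be chosen so that both $|A_j|$ and the neighbourhood size $2^j$ come out large, and the small fraction of bad $r$-sets in $N_{B'}(z)$ must be checked to put the auxiliary $r$-uniform hypergraph above the Erd\H{o}s threshold for $K^{(r)}_{t,\dots,t}$ on a sufficiently large vertex set. You have correctly identified this lemma as the main technical obstacle, but as a sketch it is not yet a proof.
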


Beyond the $r = 2$ case, there is one other family for which we can easily verify Conjecture~\ref{conj:weak}, namely, when $H$ is the subdivision of $K_{r+1}^{(r)}$, which is equal to $K_{r+1, r+1}$ with a perfect matching removed. The $r = 3$ case corresponds to the cube $Q_3$, for which it is known~\cite{ES70} that $\ext(n, Q_3) \leq C n^{8/5}$. More generally, if we write $S_r$ for the subdivision of $K_{r+1}^{(r)}$, a result of Erd\H{o}s and Simonovits~\cite[Theorem 2]{ES70} allows one to derive a bound for the extremal number of $S_r$ from a bound for the extremal number of $S_{r-1}$. This recursion shows that $\ext(n, S_r) \leq C_r n^{2 - 2/(2r-1)}$, which is of the required form.

Moving onto longer subdivisions, given a multigraph $H$ (without loops), we define the {\it $k$-subdivision} $H^k$ of $H$ to be the graph obtained from $H$ by replacing the edges of $H$ with internally disjoint paths of length $k+1$. We make the following conjecture, which would improve and generalise the result of Jiang and Seiver~\cite{JS12} discussed in the introduction that $\ext(n, K_t^{k}) = O_{k,t}(n^{1 + 16/(k+1)})$ for any odd $k$.

\begin{conjecture}
For any multigraph $H$ and any odd $k \geq 1$, there exists a constant $C$ such that 
\[\ext(n, H^{k}) \leq C n^{1 + 1/(k+1)}.\]
\end{conjecture}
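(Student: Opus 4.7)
The plan is to generalise the Bondy--Simonovits even-cycle argument to produce, for each edge of $H$, an internally vertex-disjoint path of length $k+1$ in $G$. Write $k = 2\ell-1$, so that $k+1 = 2\ell$ is even and $H^{k}$ is bipartite. Given $G$ with $Cm^{1+1/(k+1)}$ edges, Lemma~\ref{lem:ES} lets us pass to a $K$-almost-regular subgraph on $m$ vertices with minimum degree $\delta = \Theta(m^{1/(k+1)})$.

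The core step is to find a ``path-rich'' vertex set $U \subseteq V(G)$ of polynomial size $m^{\Omega(1)}$ such that every pair $u,v \in U$ is joined by at least $t := e(H)+1$ internally vertex-disjoint paths of length $k+1$ in $G$. A theta-graph strengthening of Bondy--Simonovits, analogous to the results of Faudree--Simonovits~\cite{FS83}, shows that at density $m^{1+1/(k+1)}$ many pairs are already joined by $t$ such paths; dyadic truncation together with an extraction in the style of Lemmas~\ref{lem:heavy} and~\ref{lem:large_nbd} then upgrades this to the required $U$. Once $U$ is in hand, the branch vertices of $H$ are embedded greedily inside $U$, and each edge of $H$ is routed through one of the $t$ available paths; since only $e(H)$ internal vertex sets are ever used, a union bound guarantees that at every step an unused path remains available, avoiding collisions.

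The main obstacle is producing the theta-rich set $U$ at the critical density $m^{1+1/(k+1)}$. A naive BFS/walk-counting attempt gives many walks of length $k+1$ between many pairs, but converting walks to internally vertex-disjoint paths loses polynomial factors whenever walks self-intersect or share internal vertices with one another. The Jiang--Seiver bound $O(n^{1+16/(k+1)})$ reflects precisely this slack, since their argument tolerates a polynomial overcounting in the walk-to-path conversion. Closing the gap down to $1+1/(k+1)$ will presumably require either a strengthening of Bondy--Simonovits that counts genuine paths directly, or an iterative refinement procedure that at each BFS level controls the intersection pattern of the growing neighbourhoods via a weighted-neighbourhood analysis like the one developed in Section~\ref{sec:DRC}; either way, this appears to be the central technical hurdle behind the conjecture.
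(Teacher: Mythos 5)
This statement is a conjecture, explicitly left open in the paper: the authors only verify it in the degenerate cases where $H$ has two vertices (so that $H^{k}$ is an even cycle or a theta graph), via Bondy--Simonovits and Faudree--Simonovits respectively. There is therefore no proof in the paper for me to compare against, and your proposal, to its credit, does not claim one; you correctly identify both the natural strategy and its obstruction. Still, the gap you name is the entire content of the conjecture, so it is worth being precise about where things break. What the plan requires is a set $U$ of size $n^{\Omega(1)}$ in which every pair is joined by at least $t$ internally vertex-disjoint paths of length $k+1$, already at the critical density $n^{1+1/(k+1)}$. Bondy--Simonovits and Faudree--Simonovits produce a single long even cycle or a single theta graph; neither obviously yields a large \emph{pairwise} path-rich set. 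The extraction machinery of this paper (Lemmas~\ref{lem:heavy},~\ref{lem:large_nbd} and~\ref{lem:manyC4}) is specific to $k=1$, where the length-$2$ path structure is captured exactly by the weighted codegree graph on one side; there is no analogous weighted object encoding genuine (as opposed to homomorphic) length-$(k+1)$ paths, and the walk-to-path deduplication you mention is precisely where Jiang--Seiver lose the factor that leaves them at $1 + 16/(k+1)$. Until that step is made rigorous at the critical density, the argument does not close, and nothing in the present paper closes it either.

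A more routine issue: $t = e(H)+1$ is too small. When embedding the $i$-th path of $H^{k}$, its $k$ internal vertices must avoid the up-to-$v(H)$ branch vertices and the $k(i-1)$ internal vertices already placed; since the $t$ candidate paths between a fixed pair are internally disjoint, each forbidden vertex excludes at most one candidate, so one needs $t > v(H) + k\bigl(e(H)-1\bigr)$. Relatedly, parallel edges of $H$ force several of the $t$ candidate paths to run between the same pair of branch vertices. Both are constant adjustments that do not affect the viability of the plan, but they should be stated correctly if the sketch is to be turned into a proof.
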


Perhaps the simplest case of this conjecture is when $H$ is a pair of vertices connected by two edges, where $H^{k}$ corresponds to a cycle of length $2k+2$. Since we know that $\ext(n, C_{2k+2}) = O_k(n^{1 + 1/(k+1)})$, the conjecture holds in this case. Similarly, when $H$ is a pair of vertices connected by more than two edges, the conjecture holds by a result of Faudree and Simonovits~\cite{FS83} (and, for sufficiently many edges, is tight by a result of the first author~\cite{C18}). That it should also hold for more general multigraphs seems highly plausible. However, in analogy with Conjecture~\ref{conj:main}, we might expect more.

\begin{conjecture}
For any (simple) graph $H$ and any odd $k \geq 1$, there exist positive constants $C$ and $\delta$ such that
\[\ext(n, H^{k}) \leq C n^{1 + 1/(k+1) - \delta}.\]
\end{conjecture}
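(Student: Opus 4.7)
The plan is to mimic the two-case strategy used in the proof of Theorem~\ref{thm:K_t}, with $C_4$-counting replaced by $C_{2k+2}$-counting. Since $k$ is odd, $H^k$ is bipartite, so Lemma~\ref{lem:regular} reduces us to the case where $G$ is a $K$-almost-regular balanced bipartite graph on $A \cup B$ with $|E(G)| = Cn^{1+1/(k+1)-c}$ for some small $c > 0$. The goal is a dichotomy: either $G$ contains significantly more homomorphic copies of $C_{2k+2}$ than a random graph of the same density would, in which case a variant of dependent random choice embeds any $H^k$; or else the weighted graph $W$ on $A$ whose edge weights $d_{k+1}(u,v)$ record the number of length-$(k+1)$ paths from $u$ to $v$ in $G$ is well-spread, and an iterative argument akin to Lemma~\ref{lem:iterate} produces $H^k$.

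First I would prove an analogue of Lemma~\ref{lem:manyC4asym_2}: if $|\Hom(C_{2k+2}, G)| \geq M \cdot |\Hom(P_{k+2}, G)|$ for $M = C' \log n$, then $G$ contains a copy of $H^k$, where $P_{k+2}$, the $k$-subdivision of an edge, plays the role of $K_{1,2}$ from the original lemma. Averaging picks out a vertex $x \in A$ with a local excess of $C_{2k+2}$'s; one then passes to the weighted graph on the iterated neighbourhood of $x$ at depth $\lceil k/2 \rceil$, with weights given by path-counts of the complementary length, and runs the dyadic pigeonhole and bad-pair analysis of Section~\ref{sec:DRC} to find a large set of vertices pairwise connected by many internally-disjoint length-$(k+1)$ paths, enough to embed any $H^k$. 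In the complementary case, $\sum_{uv} d_{k+1}(u,v)^2$ is bounded, and $W$ admits a $(\rho,d)$-density lower bound via iterated applications of Lemma~\ref{lem:local_dense} together with a convexity argument for path counts. Running the iteration of Lemma~\ref{lem:iterate} on $W$ then yields, for each vertex of $H$ in turn, a chosen vertex in $A$ together with many length-$(k+1)$ path options to the previously chosen vertices, ultimately embedding $H^k$.

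The main obstacle is degeneracy control. For $k=1$, a degenerate embedding of the subdivision is essentially pinned down by a $K_{1,3}$ plus a constant amount of data, and this count is easily beaten by the supersaturation estimate for $K_{s,t}$'s or good tuples. For larger odd $k$, each subdividing path has $k$ interior vertices and two such paths can collide under many distinct alignments, so the enumeration of degenerate configurations is far more involved. Handling this will likely require a path-counting analogue of Lemma~\ref{lem:inj}, exploiting that $C_{2k+2}$ and its subgraphs are weakly norming so that strong control on $(k+1)$-path multiplicities transfers to all subpatterns, together with a per-layer bookkeeping of where collisions can occur along each subdividing path. Even granting all of this, the resulting $\delta > 0$ is apt to decay rapidly in $k$ and $v(H)$, analogously to the $6^{-t}$ bound appearing in Theorem~\ref{thm:K_t}, and I would expect matching or approaching the conjectured exponent $1/(k+1)$ to require genuinely new ideas beyond a straightforward lift of the present argument.
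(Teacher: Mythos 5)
This statement is not proved in the paper: it appears in the concluding remarks as an open conjecture, immediately after the weaker version about multigraphs, and no proof or even proof sketch is offered. So there is no ``paper's own proof'' to compare your proposal against.

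Your proposal is a plausible outline of how one might try to lift the $k=1$ machinery to odd $k>1$, but it is a strategy, not a proof, and you yourself flag the fatal gap: degeneracy control for longer subdividing paths. In the proofs of Theorems~\ref{thm:bipartite} and~\ref{thm:K_t}, a degenerate homomorphic copy is pinned by a single $K_{1,3}$ plus a bounded amount of codegree data, precisely because each subdividing path has only one interior vertex, so a collision forces a specific local configuration. For $k>1$, two subdividing paths of length $k+1$ can collide at any of $k$ interior positions and under many alignments, and the number of degenerate configurations is governed by quantities you do not control. You say this ``will likely require'' a path-counting analogue of Lemma~\ref{lem:inj} plus per-layer bookkeeping, and then concede that genuinely new ideas may be needed --- but those ideas are not produced, so the key inequality (non-degenerate embeddings outnumber degenerate ones) is not established. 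There is also a second, earlier gap you do not address: in the analogue of Lemma~\ref{lem:manyC4asym_2}, a large weight $d_{k+1}(u,v)$ counting length-$(k+1)$ walks does not by itself give many \emph{internally disjoint} $u$--$v$ paths --- the mass could be concentrated through a handful of intermediate vertices --- whereas for $k=1$ a large codegree $d(u,v)\ge h$ does immediately give $h$ disjoint paths of length $2$, which is exactly what the embedding step in Section~\ref{sec:DRC} uses. Until both of these issues are resolved the argument does not close, which is consistent with the statement remaining a conjecture in the paper.
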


\vspace{4mm}
\noindent
{\bf Note added.}
Shortly after this paper was submitted, Janzer~\cite{Ja18} settled one of our problems by showing that $\ext(n, H_t) \leq C_t n^{3/2 - 1/(4t-6)}$ for all $t \geq 3$. This is tight up to the constant for $t = 3$ and the connection with pseudorandom graphs discussed in the concluding remarks suggests that it may even be tight for all $t$.

\vspace{4mm}
\noindent
{\bf Acknowledgements.} This paper was partially written while the first author was visiting the California Institute of Technology as a Moore Distinguished Scholar and he is extremely grateful for their kind support.

\bibliographystyle{abbrv}
\bibliography{references}

\end{document}